\def\subsection{\@startsection{section}{1}%
  \z@{1.2\linespacing\@plus\linespacing}{.5\linespacing}%
  {\normalfont\scshape\centering}}
\def\subsection{\@startsection{subsection}{2}%
  \z@{0.9\linespacing\@plus.7\linespacing}{-.5em}%
  {\normalfont\bfseries}}
\definecolor{gr}{rgb}   {0.,   0.6,   0.25 }
\definecolor{mg}{rgb}   {0.85,  0.,    0.85}
\definecolor{marin}{rgb}   {0.,   0.,   0.8}
\definecolor{rouge}{rgb}   {0.8,   0.,   0.}
\definecolor{orange}{rgb}   {0.8,   0.4,   0.}
\newtheorem{theorem}{Theorem}[section]
\newtheorem{lemma}[theorem]{Lemma}
\newtheorem{proposition}[theorem]{Proposition}
\theoremstyle{definition}
\theoremstyle{remark}
\numberwithin{equation}{section}
\newcommand{\ee}{\hskip0.15ex}
\newcommand{\dd}[1]{_{\raise-0.3ex\hbox{$\scriptstyle #1$}}}
\newcommand{\on}[1]{\raise-.5ex\hbox{\big|}_{#1}}
\renewcommand{\Re}{\operatorname{\rm Re}}
\newcommand{\pv}{\mathop{\rm p.v.}}
\newcommand {\DNorm}[2]{ \mathchoice
    {\|\ee #1\ee\|\dd{#2}}
    {\| #1 \|_{#2}}
    {\| #1 \|_{#2}}
    {\| #1 \|_{#2}} }
\newcommand {\DNormc}[2]{ \mathchoice
    {\|\ee #1\ee\|\dd{#2}^2}
    {\| #1 \|_{#2}^2}
    {\| #1 \|_{#2}^2}
    {\| #1 \|_{#2}^2} }
\newcommand\C{{\mathbb C}}
\newcommand\R{{\mathbb R}}
\newcommand\N{{\mathbb N}}
\newcommand\Z{{\mathbb Z}}
\renewcommand{\Re}{\operatorname{Re}}
\newcommand\cC{{\mathcal{C}}}
\newcommand\cL{{\mathcal{L}}}
\newcommand {\Id}{\mathbb I}
\newcommand{\dist}{\mathrm{dist}}
\newcommand{\sign}{\mathrm{sign}}
\newcommand{\Span}{\mathrm{span}}
\newcommand{\ov}{\overline}
\title[A simple discretization of the finite Hilbert transformation]{
Convergence of a simple discretization of \\the finite Hilbert transformation
}
\author{Martin Costabel}
\address{Univ. Rennes, CNRS, IRMAR - UMR 6625, F-35000 Rennes, France}
\email{Martin.Costabel@univ-rennes1.fr}
\keywords{singular integral equation, Hilbert transform, delta-delta discretization, method of discrete vortices, discrete dipole approximation}
\subjclass{65R20, 45E05}
\date{04/09/2023}
\let\hat=\widehat
\begin{document}

\begin{abstract}
For a singular integral equation on an interval of the real line, we study the behavior of the error of a delta-delta discretization. We show that the convergence is non-uniform, between order $O(h^{2})$ in the interior of the interval and a boundary layer where the consistency error does not tend to zero.
\end{abstract}

\maketitle

\section{Introduction}\label{S:intro}
Let $a,b\in\R$ with $a<b$. On the interval $\Omega=(a,b)$ we consider the singular integral equation, abbreviated as  $(\lambda\Id-A_{\Omega})u=f$,
\begin{equation}
\label{E:SIE}
 \lambda u(x) - \pv\int_{\Omega}\frac{u(y)}{i\pi(x-y)}dy = f(x)\,,\quad x\in\Omega\,.
\end{equation}
We discretize it in the simplest imaginable fashion. We choose $N\in\N$ defining the mesh width $h=\frac1N$ and fix some origin $a^{N}\in\R$, thus  defining the infinite regular grid
$$
   \Sigma^{N}=\{x^{N}_{m}=a^{N}+mh \mid m\in\Z\}
$$
and its finite counterpart $\Sigma^{N}\cap\Omega$ indexed by
$$
 \omega^{N} = \{m\in\Z \mid x^{N}_{m}\in \Omega\}\,,
$$
and consider the system
\begin{equation}
\label{E:DDD}
 \lambda u_{m} - \frac1{i\pi N}\sum_{n\in\omega^{N},m\ne n}\frac{u_{n}}{x^{N}_{m}-x^{N}_{n}}= f(x^{N}_{m}) \,,\quad (m\in\omega^{N})\,.
\end{equation}
This system can easily be programmed and solved with a couple of lines of code, without any knowledge of analysis, and it produces surprisingly good results, see Figure~\ref{F:DDD} for some examples for which the exact solution of the integral equation is known. 
These examples are introduced below in Section~\ref{SS:Expl} and further analyzed numerically in Section~\ref{S:Num}.
\begin{figure}
\centering
\includegraphics[width=0.32\linewidth]{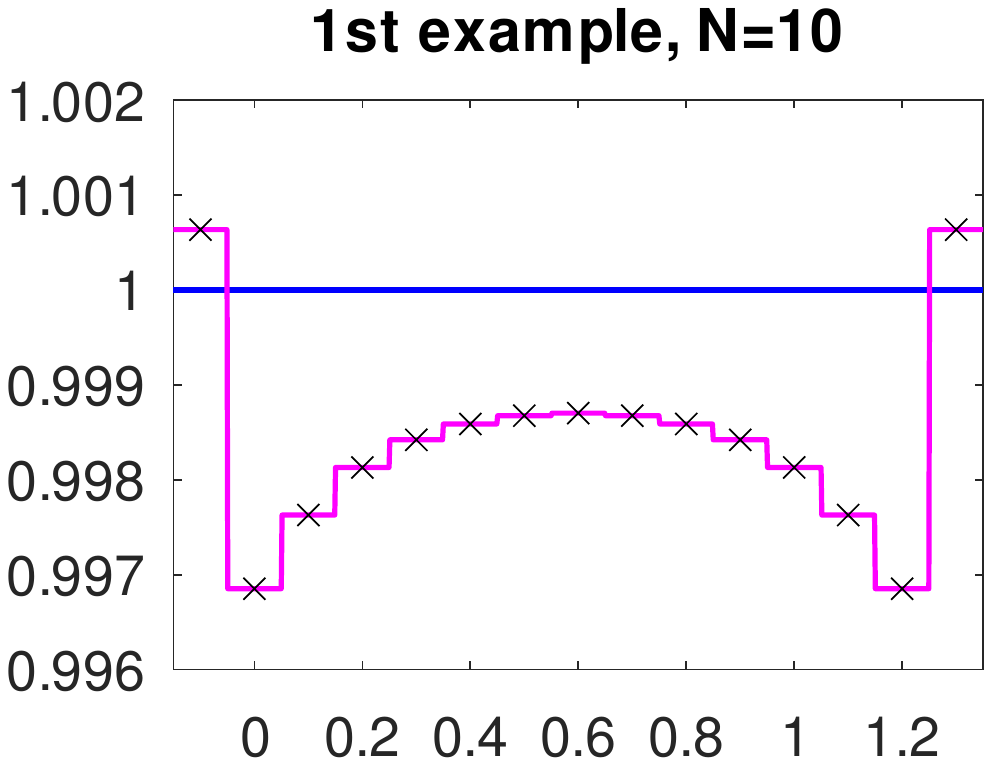}\hfill
\includegraphics[width=0.30\linewidth]{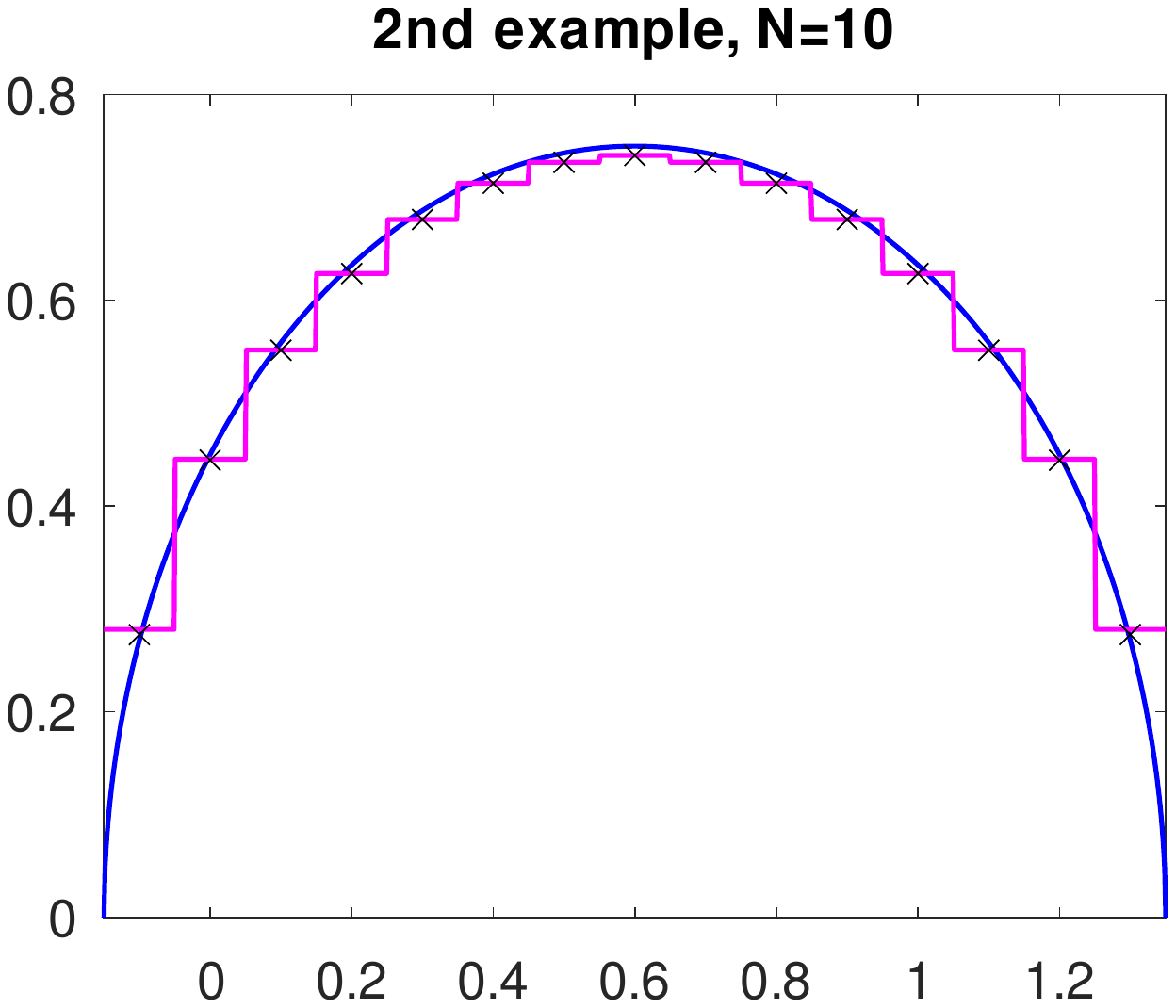}\hfill
\includegraphics[width=0.30\linewidth]{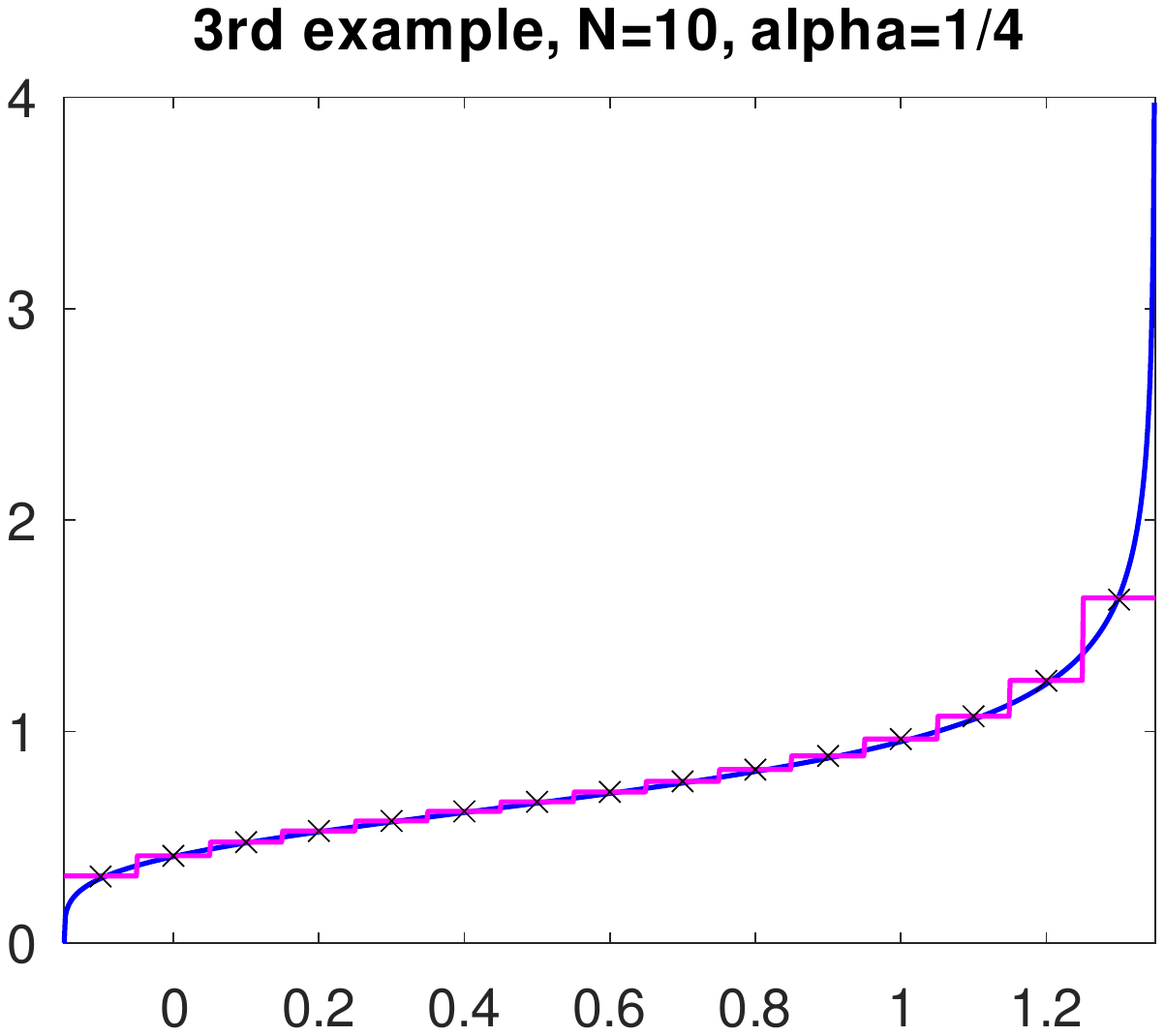}
\caption{3 examples: Exact solutions and their computed p/w constant approximations}
\label{F:DDD}
\end{figure}

This simple delta-delta scheme is similar to Lifanov's method of discrete vortices \cite{BelotserkovskyLifanov1993}, except that we take the same grid for the quadrature points and the evaluation points, and we put zero on the diagonal. 

There is also a similarity to the fully discrete Calder\'on calculus analyzed in \cite{sayas2014-b}, although the differences, namely that we consider an open interval and not a closed curve and that our integral operator is strongly singular, are too important to try a similar analysis.

Note that the approximation scheme \eqref{E:DDD} for the integral equation \eqref{E:SIE} is not
a projection method (or Petrov-Galerkin scheme) in any meaningful sense, although it has the form (except for the diagonal term) of a Galerkin scheme with Dirac deltas as test and trial functions. This has the negative consequence that some tools are not available that one would like to use in order to generalize results proved for the model singular integral equation \eqref{E:SIE} to equations with more general strongly singular kernels, a variable multiplier $\lambda$ instead of a constant, or equations with lower order terms. These tools use the persistence of stability under compact perturbations of the operator, and this is known for projection methods and also for more general discrete approximation schemes in the Stummel-Vainikko sense, see for example \cite{Vainikko1981}. But it seems that our simple scheme does not fit into any of these frameworks.  

The question of a convergence proof for the delta-delta approximation \eqref{E:DDD} came up in the context of our recent research into the error analysis of higher-dimensional simple discretization methods for strongly singular volume integral equations related to the Discrete Dipole Approximation (DDA). The latter has been a standard tool in computational electrodynamics for half a century \cite{PurcellPennypacker1973,YurkinHoekstra2007,Chaumet_M_22}, with a wide range of applications from interstellar dust clouds to nano-particles. Despite the popularity of DDA in computational physics, there is very little known about its mathematical properties, but we have now some results about its stability \cite{CoDaNeStab1,CoDaNeStab2}. In order to complete the convergence proof, one needs estimates for the consistency error, and for gaining insight on the behavior of this, the one-dimensional example \eqref{E:SIE}, \eqref{E:DDD} presented itself as a ``toy problem'', where the error analysis should be more transparent, while still showing some essential peculiarities of the more complicated higher-dimensional situation. It turns out that this is indeed the case, but that the results are interesting by themselves, and we present this in the following.

\section{Stability}\label{S:Stab}

Numerical stability of the system \eqref{E:DDD}, which we abbreviate as 
$(\lambda\Id-T^{N})U^{N}=F^{N}$, means that there exists a bound of $U^{N}$ by $F^{N}$ uniform in $N$, that is, a uniform resolvent estimate in some operator norm
$\|(\lambda\Id-T^{N})^{-1}\|\le C_{S}$ for all $N$.

\subsection{Stability in $\ell^{2}$}\label{SS:Stab}
Such a stability estimate has been proved in \cite{CoDaNeStab1}, and although most of it is based on quite well known arguments, for the sake of completeness we quote the result and the main points of its proof here.

\begin{proposition}
 \label{P:Stab1D}
The matrix $T^{N}$ of the system \eqref{E:DDD} is selfadjoint with its eigenvalues in $\cC=[-1,1]$. For $\lambda\in\C$ 
 the discretization method \eqref{E:DDD} is stable in the $\ell^{2}$ norm if and only if the integral operator $\lambda\Id-A_{\Omega}$ is boundedly invertible in $L^{2}(\Omega)$, and this is equivalent to 
 $\lambda\not\in\cC$.
 For such $\lambda$, there is an estimate for the operator norms 
 \begin{equation}
\label{E:Res1D}
 \|(\lambda\Id-T^{N})^{-1}\|_{\cL(\ell^{2}(\omega^{N}))} \le \dist(\lambda,\cC)^{-1}
 = \|(\lambda\Id-A_{\Omega})^{-1}\|_{\cL(L^{2}(\Omega))}\,.
\end{equation}
\end{proposition}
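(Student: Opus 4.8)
The plan is to exploit the selfadjointness of both $T^{N}$ and $A_{\Omega}$, so that every resolvent norm is the reciprocal of a distance to a spectrum, and to realize $T^{N}$ as a finite section of the doubly infinite discrete Hilbert transform on $\ell^{2}(\Z)$.

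First I would record the selfadjointness. Since $x^{N}_{m}-x^{N}_{n}=(m-n)h$, the off-diagonal entries are simply $T^{N}_{mn}=\frac{1}{i\pi(m-n)}$, so $\overline{T^{N}_{nm}}=T^{N}_{mn}$, and with the zero diagonal this gives $T^{N}=(T^{N})^{*}$; the same computation applied to the real antisymmetric kernel of $A_{\Omega}$ gives $A_{\Omega}=A_{\Omega}^{*}$ on $L^{2}(\Omega)$. Next I would identify $T^{N}$ as the finite section (compression) to $\ell^{2}(\omega^{N})$ of the convolution operator $H$ on $\ell^{2}(\Z)$ with symbol $\sigma(\theta)=\sum_{k\neq0}\frac{e^{ik\theta}}{i\pi k}$. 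Summing this conditionally convergent series yields the sawtooth $\sigma(\theta)=1-\theta/\pi$ for $\theta\in(0,2\pi)$, hence $\|H\|_{\cL(\ell^{2}(\Z))}=\|\sigma\|_{\infty}=1$; extending $u\in\ell^{2}(\omega^{N})$ by zero gives $|\langle T^{N}u,u\rangle|=|\langle Hu,u\rangle|\le\|u\|^{2}$, and since $T^{N}$ is selfadjoint this forces $\|T^{N}\|_{\cL(\ell^{2}(\omega^{N}))}\le1$, i.e. the eigenvalues of $T^{N}$ lie in $\cC=[-1,1]$.

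The case $\lambda\notin\cC$ is then immediate: $\|(\lambda\Id-T^{N})^{-1}\|=\dist(\lambda,\sigma(T^{N}))^{-1}\le\dist(\lambda,\cC)^{-1}$, which is \eqref{E:Res1D} and, being $N$-independent, is $\ell^{2}$-stability. For the continuous operator I would invoke the classical spectral description of the finite Hilbert transform (Tricomi, S\"ohngen, Koppelman--Pincus): $\sigma(A_{\Omega})=\cC$ is purely continuous, so selfadjointness gives $\|(\lambda\Id-A_{\Omega})^{-1}\|_{\cL(L^{2}(\Omega))}=\dist(\lambda,\cC)^{-1}$ for $\lambda\notin\cC$ and non-invertibility for $\lambda\in\cC$. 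This matches the two norms in \eqref{E:Res1D} and proves the ``if'' direction of the equivalence. It remains to show instability for $\lambda\in\cC$. For $\lambda\in(-1,1)$ I would build quasimodes $u^{N}_{n}=c_{N}\chi\!\big((n-n_{0})/M_{N}\big)e^{in\theta_{0}}$ with $\sigma(\theta_{0})=\lambda$, a fixed smooth cutoff $\chi$, a window $M_{N}\to\infty$ with $M_{N}h\to0$ so that the support stays strictly inside $\omega^{N}$, and $n_{0}$ centering it; a commutator estimate for $H$ against the slowly varying cutoff gives $\|u^{N}\|=1$ and $\|(\lambda\Id-T^{N})u^{N}\|\to0$, so $\|(\lambda\Id-T^{N})^{-1}\|\to\infty$. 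The endpoints $\lambda=\pm1$ follow by a diagonal argument letting $\lambda\to\pm1$ from inside. Chaining these: stable $\iff\lambda\notin\cC\iff\lambda\Id-A_{\Omega}$ boundedly invertible.

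The two genuinely analytic points are the exact constant $\|H\|=1$ (summing the Fourier series of the discrete Hilbert kernel and reading off $\|\sigma\|_{\infty}$) and the rigorous quasimode argument --- in particular controlling the truncation to $\omega^{N}$ and the endpoints $\lambda=\pm1$ --- and I expect the latter to be the main obstacle. A clean alternative to the quasimodes is to quote a Szeg\H{o}-type equidistribution theorem for the eigenvalues of finite sections of selfadjoint Toeplitz operators, which directly yields $\dist(\lambda,\sigma(T^{N}))\to0$ for every $\lambda\in\cC$.
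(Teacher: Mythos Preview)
Your proposal is correct and largely parallel to the paper's proof: both identify $T^{N}$ as a finite section of the infinite discrete Hilbert transform on $\ell^{2}(\Z)$, compute the sawtooth symbol, use compression of the numerical range/norm to get $\sigma(T^{N})\subset\cC$ and the resolvent bound, and cite Koppelman--Pincus for the exact spectrum of $A_{\Omega}$.

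The one genuine difference is the instability direction (stability $\Rightarrow\lambda\notin\cC$). You construct explicit Weyl quasimodes supported in $\omega^{N}$ (or appeal to Szeg\H{o} equidistribution) to show $\|(\lambda\Id-T^{N})^{-1}\|\to\infty$ for $\lambda\in\cC$. The paper instead observes that \eqref{E:DDD}, while not a projection method for the integral equation, \emph{is} a bona fide Galerkin scheme for the infinite Toeplitz system $(\lambda\Id-T)U=F$ on $\ell^{2}(\Z)$; stability then gives quasi-optimal convergence (C\'ea's lemma), hence in the limit $\|U\|\le C_{S}\|F\|$ for every $F$, so $\lambda\Id-T$ is boundedly invertible and $\lambda\notin\sigma(T)=\cC$. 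This abstract route avoids all the technicalities you correctly flag as the main obstacle (commutator estimates, windowing inside $\omega^{N}$, the endpoints $\pm1$), and it delivers the quantitative bonus $\dist(\lambda,\cC)\ge C_{S}^{-1}$ for free. Your quasimode argument, by contrast, is constructive and would give more: not just blow-up of the resolvent norm but an explicit rate and localization of the near-eigenvectors.
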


The arguments are based on Fourier analysis and on a Lax-Milgram or Galerkin style use of the notion of numerical range. We recall the definition of the numerical range $W(B)$ of a bounded linear operator $B$ in Hilbert space, namely the range of values on the unit sphere of the sesquilinear form associated with $B$
\begin{equation}
\label{E:W(B)}
 W(B) = \{(u,Bu) | \|u\|=1\}\,.
\end{equation}
This is a bounded set contained in the disk with radius $\|B\|$, convex by the Toeplitz-Hausdorff theorem, and its closure contains the spectrum of $B$. 
Writing for any $u$ of norm one and $z=(u,Bu)$ the estimate
$$
  \dist(\lambda,W(B)) \le |\lambda-z| = |(u,\lambda u-Bu)| \le \|(\lambda\Id-B)u\|\,,
$$
one immediately gets the resolvent estimate in the operator norm
\begin{equation}
\label{E:resB}
  \|(\lambda\Id-B)^{-1}\| \le \dist(\lambda,W(B))^{-1}\,.
\end{equation}
Considering that the restriction to a subspace does not increase the numerical range, one gets the same resolvent estimate for operators defined from $B$ by restricting the sesquilinear form to a subspace. This is the Lax-Milgram argument for stability of Galerkin methods, and it applies here both to the integral equation \eqref{E:SIE} and the discrete system \eqref{E:DDD}. 

The integral operator $A_{\Omega}$ in \eqref{E:SIE} and the system matrix $T^{N}$ in \eqref{E:DDD} have this feature in common: They are both projections to bounded sets of translation invariant (i.e.\ convolution) operators, and these can be diagonalized by Fourier analysis. 

The finite Hilbert transformation $A_{\Omega}$ is the restriction to $\Omega$ of the Hilbert transformation $A$ on $\R$, that is the convolution with the kernel $K$ defined as the distribution 
$$
  K(x) = \frac1{i\pi}\pv\frac1{x}
$$
that has the Fourier transform
$$
 \hat K(\xi)=\sign\,\xi \quad(\xi\in\R)\,.
$$
From the Plancherel theorem follows that the convolution operator $A$ acting in the Hilbert space $L^{2}(\R)$ is equivalent to the operator of multiplication with its symbol $\hat K$ in $L^{2}(\R)$. It follows in particular that $A$ is a selfadjoint involution and its spectrum consists of two eigenvalues $\pm1$ of infinite multiplicity. 

This implies $W(A_{\Omega})\subset W(A)=\cC$ and the resolvent estimate
$$
 \|(\lambda\Id-A_{\Omega})^{-1}\|_{\cL(L^{2}(\Omega))}\le \dist(\lambda,\cC)^{-1}\,.
$$

On the discrete side there is a parallel argument using Fourier series instead of the Fourier transform.
The system matrix $T^{N}$ is a finite section of a (bi-)infinite Toeplitz matrix $T$ that, owing to
$$
   \frac1{i\pi N}\frac1{x^{N}_{m}-x^{N}_{n}} = \frac1{i\pi(m-n)}\,,
$$
is independent of $N$,
\begin{equation}
\label{E:Toep}
  T = \big( \frac1{i\pi(m-n)} \big)_{m,n\in\Z}\quad
  \mbox{ with zero on the diagonal.}
\end{equation}
Let $\sigma_{T}$ be its symbol (characteristic function) defined by the Fourier series
$$
 \sigma_{T}(\tau) = \sum_{m\in\Z} \frac1{i\pi m} e^{im\tau} 
       =  \sum_{m=1}^{\infty} \frac{2\sin m\tau}{\pi m}= \sign \,\tau - \frac \tau\pi\quad (\tau\ne0)\,,\quad \sigma_{T}(0)=0\,.
$$
Then by the Plancherel theorem it follows that the discrete convolution operator defined by $T$ acting on $\ell^{2}(\Z)$ is equivalent to the operator of multiplication with $\sigma_{T}$ in $L^{2}(-\pi,\pi)$. Therefore both its spectrum and the closure of its numerical range $\overline{W(T)}$ are equal to $\cC=[-1,1]$. Since $T^{N}$ arises from $T$ by restriction to $\ell^{2}(\omega^{N})$, we get the inclusion $W(T^{N}) \subset \cC$ and the uniform resolvent estimate in \eqref{E:Res1D}.

With these simple arguments, we have proved Proposition~\ref{P:Stab1D}, except for two points:
The assertion that stability implies that $\lambda\not\in\cC$, and the last equality in \eqref{E:Res1D}, where we would only get an inequality. 

For the first point we use the fact that whereas the system \eqref{E:DDD}
\begin{equation}
\label{E:discfin}
(\lambda\Id-T^{N})U^{N}=F^{N}\,,
\end{equation}
when considered as an approximation scheme for the integral equation \eqref{E:SIE}
$(\lambda\Id-A_{\Omega})u=f$, is not a projection method, it is indeed a standard Galerkin scheme when considered as an approximation scheme for the infinite system  
\begin{equation}
\label{E:discinf}
(\lambda\Id-T)U=F\quad \mbox{ in } \ell^{2}(\Z)\,.
\end{equation}
As such, it converges whenever it is stable; in detail:
Denote by $P_{N}$ the orthogonal projection operator in $\ell^{2}(\Z)$ onto the subspace of sequences vanishing outside $\omega^{N}$, identified with $\ell^{2}(\omega^{N})$, so that we can write
$(\lambda\Id-T^{N})P_{N}=P_{N}(\lambda\Id-T)P_{N}$.
If we take as the origin $a^{N}$ of the grid a fixed point inside $\Omega$, then 
$$
  \lim_{N\to\infty}P_{N}U = U \quad \mbox{ in $\ell^{2}(\Z)$ for any $U$}.
$$
 If $U^{N}$ and $U$ satisfy \eqref{E:discfin}, \eqref{E:discinf} with 
 $F^{N}=P_{N}F$, then we have the identity
$$
  U-U^{N} = \big(\Id - (\lambda\Id-T^{N})^{-1}P_{N}(\lambda\Id-T)\big) (U-P_{N}U)\,.
$$
Now let $\lambda\in\C$ be such that the scheme \eqref{E:discfin} is stable,
$$
  \|(\lambda\Id-T^{N})^{-1}\| \le C_{S}\,.
$$
Then $\|U-U^{N}\| \le \big(1+C_{S}\|\lambda\Id-T\|\big)\|U-P_{N}U\|$ (this is the quasi-optimality of the Galerkin scheme, Céa's lemma), hence $U^{N}\to U$, and because of $\|U^{N}\|\le C_{S}\|F\|$, we find in the limit
$$
  \|U\|\le C_{S}\|F\|\,.
$$
This implies that $\lambda\Id-T$ is  invertible with the norm of the inverse bounded by $C_{S}^{-1}$, hence $\lambda\not\in\cC$, in fact 
$\dist(\lambda,\cC)\ge C_{S}^{-1}$\,.

For the  second point, we recall that
the finite Hilbert transformation and its spectral theory is a well-studied classical object. In particular it can be diagonalized by a generalized Fourier transformation \cite{KoppelmanPincus1959} implying that as soon as $\Omega$ is a proper subinterval of $\R$ (even a half line), its action in $L^{2}(\Omega)$ is unitarily equivalent to the operator of multiplication by $\sigma$ in $L^{2}(-1,1)$ with $\sigma(\xi)=\xi$.
Both its spectrum and the closure of its numerical range $\overline{W(A_{\Omega})}$ are therefore equal to 
 $\cC$, which shows the last equality in \eqref{E:Res1D}.

\subsection{Solution behavior}\label{SS:Sol}
The simplicity of the statement of Proposition~\ref{P:Stab1D} is somewhat deceptive and hides a more complicated situation: Whereas the solvability of the finite system \eqref{E:DDD} is, of course, independent of the norm we choose in the space of sequences, the latter only being used to describe the stability, this is quite different for the solvability of the integral equation \eqref{E:SIE}. Here, owing to the singular behavior of the solution at the endpoints of $\Omega$, the spectrum of the finite Hilbert transform depends on the function space.
The generalized eigenfunctions
$$
  g_{\xi}(x) = \tfrac1\pi \sqrt{\tfrac{b-a}{2(x-a)(b-x)}}
  e^{\frac{i}{2\pi}\log\frac{\xi+1}{1-\xi}\log\frac{x-a}{b-x}}
$$ 
for which it is shown in \cite{KoppelmanPincus1959} that they diagonalize the operator via the  Hilbert space isomorphism $f\mapsto F:L^{2}(a,b)\to L^{2}(-1,1)$ with the transform pair
$$
 F(\xi) = \tfrac1{\sqrt{1-\xi^{2}}}\int_{a}^{b}\ov{g_{\xi}(x)}f(x)dx,\quad
  f(x) = \int_{-1}^{1}g_{\xi}(x)F(\xi)\tfrac{d\xi}{\sqrt{1-\xi^{2}}}
$$
such that $A_{\Omega}f$ is transformed to $\xi F(\xi)$, are genuine eigenfunctions in $L^{p}(\Omega)$ with $p<2$. More precisely, one considers the arcs of circles
$$
  C_{\alpha_{0}} = \{\lambda\in\C \mid \tfrac{\lambda+1}{\lambda-1} = e^{2\pi i\alpha}, \Re\alpha=\alpha_{0}\}
$$ 
which connect the points $-1$ and $1$ inside the unit circle if 
$\frac14<\alpha_{0}<\frac34$ and outside if $\alpha_{0}\in(0,\frac14)\cup(\frac34,1)$.
 Then the spectrum of $A_{\Omega}$ in $L^{p}(a,b)$ is the domain 
between $C_{1-\frac1p}$ and $C_{\frac1p}$. Its interior consists of eigenvalues. For an eigenvalue $\lambda$, define the exponent $\alpha$ by the above relation
$$
\alpha = \tfrac1{2\pi i}\log\tfrac{\lambda+1}{\lambda-1} 
  \quad\mbox{ or equivalently }\quad
  \lambda = -i \cot\pi\alpha,\qquad
  1-\tfrac1p < \Re\alpha < \tfrac1p\,.
$$
Then the corresponding eigenfunction in $L^{p}(a,b)$ is
$$
  u(x) = (x-a)^{-\alpha} (b-x)^{\alpha-1}.
$$
The spectrum in the dual space $L^{\frac p{p-1}}(\Omega)$ is the same set. 
Thus for $p<2$ the solution of \eqref{E:SIE} exists, in general, but is not unique, and for $p>2$ it is unique, but does not exist for all right hand sides.
Only for $p=2$, the two arcs of circle degenerate to the interval $\cC$, and for $\lambda\in\cC$ the operator $\lambda\Id-A_{\Omega}$ in $L^{2}(\Omega)$ is injective with dense range.

Thus, the delta-delta discretization manages to mimick the solution behavior of the singular integral equation for the specific value $p=2$. 

For $A_{\Omega}$, it is also known how to write the resolvent explicitly as a singular integral operator with weight, involving multiplication by powers of the distance to the points $a$ and $b$, see for example \cite{Soehngen1954} or \cite{Tricomi1957}. For the discrete version $T^{N}$, such explicit formulas for the diagonalization and resolvent do not seem to be available.  
 
\subsection{Exact solutions}\label{SS:Expl}
Examples where both $u$ and $f$ are explicitly known functions can be obtained from function-theoretic arguments.
For $u\in L^{1}(a,b)$ define the Cauchy integral for $z\in\C\setminus[a,b]$
$$
  w(z) = \frac1{2\pi i}\int_{a}^{b}\frac{u(y)}{y-z}dy\,.
$$
Then $w$ is holomorphic in $\C\setminus[a,b]$, behaves as $O(|z|^{-1})$ as $|z|\to\infty$, and satisfies on $\Omega=(a,b)$  the jump relations
\begin{equation}
\label{E:jump}
 w_{+} - w_{-} = u\,;\qquad w_{+}+w_{-} = A_{\Omega}u,
 \quad \mbox{ where }\quad
 w_{\pm}(x) = \lim_{\varepsilon\to0+} w(x\pm i\varepsilon)\,. 
\end{equation}
Therefore the integral equation \eqref{E:SIE} is equivalent to the jump condition
$$
 (\lambda-1) w_{+} - (\lambda+1) w_{-} =f\,.
$$
Reciprocally, any function $w$ that is holomorphic outside $[a,b]$, vanishes at infinity and has square integrable upper and lower traces on the branch cut $[a,b]$ provides a solution $u$ to the integral equation \eqref{E:SIE} with right hand side $f$, if $u$ and $f$ are defined from the traces $w_{\pm}$ via formulas \eqref{E:jump}.

For example, the function $w_{0}(z)=\log\frac{z-a}{z-b}$ is (or can be chosen to be) holomorphic in $\C\setminus[a,b]$, is $O(|z|^{-1})$ at infinity, and has the upper and lower traces
$$ 
 w_{0,+}(x)= \log\tfrac{x-a}{b-x} -i\pi\,,\quad w_{0,-}(x)= \log\tfrac{x-a}{b-x} +i\pi
$$
Since these traces belong to $L^{2}(a,b)$, the jump
$u_{0}(x)=-2\pi i$ solves the integral equation \eqref{E:SIE} if we choose the right hand side 
$f_{0}(x) = -2\pi i \lambda -2 \log\frac{x-a}{b-x}$.

Another example is
$w_{1}(x)= \sqrt{(z-a)(z-b)}-z+\tfrac{a+b}2$ that has the right branch cut and behavior at infinity. Traces on $(a,b)$ are
$$ 
 w_{1,+}(x)= i\sqrt{(x-a)(b-x)}-x+\tfrac{a+b}2\,,\quad 
 w_{1,-}(x)= -i\sqrt{(x-a)(b-x)}-x+\tfrac{a+b}2
$$
The jump $u_{1}(x)=2i\sqrt{(x-a)(b-x)}$ therefore solves the integral equation \eqref{E:SIE} if we choose the right hand side
$f_{1}(x)=2i\lambda\sqrt{(x-a)(b-x)} + 2x -a-b$.

Other examples can be obtained by applying an entire analytic function $F$ to the preceding example functions $w$, provided $F(0)=0$. 
For a third example, we take the function $F(w)=e^{\alpha w}-1$ and apply it to the first example above, that is, we choose an exponent $\alpha\in\C$ and define
$w_{2}(z)=e^{\alpha\log\frac{z-a}{z-b}}-1=\big(\frac{z-a}{z-b}\big)^{\alpha}-1$.\\
The traces  on the branch cut $w_{2,\pm}(x)=(x-a)^{\alpha}(b-x)^{-\alpha}e^{\mp i\pi\alpha}-1$ belong to $L^{2}(a,b)$ if $\alpha\in(-\frac12,\frac12)$. 
The corresponding exact solution $u_{2}$ and right hand side $f_{2}$ are given by 
$$
  u_{2}(x) = -2i\sin\pi\alpha \Big(\frac{x-a}{b-x}\Big)^{\alpha}\quad\mbox{ with }
  A_{\Omega}u_{2}(x) = 2\cos\pi\alpha \Big(\frac{x-a}{b-x}\Big)^{\alpha}-2 \;\mbox{ and }
  f_{2} = \lambda u_{2} - A_{\Omega}u_{2}\,.
$$
Note that if $\lambda=i\cot\pi\alpha$, then $f_{2}$ is a constant function.
 
\section{Error estimates}\label{S:Err}

\subsection{Approximate solutions}\label{SS:Approx}
 For sake of simplicity we assume from now on that the length of $\Omega$ is a multiple of $h$ and that the origin $a^{N}$ is chosen such that the boundary points $a$ and $b$ are midpoints of the mesh.
\begin{equation}
\label{E:abmesh}
 b-a= Mh\,,\quad
 M\in\N\,,\quad
 \omega^{N} = \{1,\dots,M\}\,,\quad
 x_{1}^{N} = a+\tfrac h2\,,\quad
 x_{M}^{N} = b-\tfrac h2\,.
\end{equation}
When no confusion is possible, we omit superscripts $N$ and write the discrete system as
\begin{equation} 
\label{E:DDA1D}
 \lambda u_{m}
 -\frac1{i\pi N}\sum_{\begin{subarray}{l}n=1\\n\ne m\end{subarray}}^{M}\frac{u_{n}}{x_{n}-x_{m}} 
 = f_{m}\,,
 \quad m\in\omega^{N}\,.
\end{equation} 
In the spirit of
Lifanov's method of discrete vortices \cite{BelotserkovskyLifanov1993}, 
the delta-delta scheme can be considered as a Nystr\"om-type quadrature method, where the integral in \eqref{E:SIE} is replaced by its approximation by the midpoint rule.
The approximate solution $u^{(N)}$ satisfies the modified equation\begin{equation}
\label{E:Ny}
 \lambda u^{(N)}(x) - \frac1{i\pi}\sum_{n\in\omega^{N}}\frac{1}{N}\frac{u^{(N)}(x_{n})}{x_{n}-x} = f(x) \,,
\end{equation}
and its evaluation in the gridpoints $x=x_{m}$ gives the system \eqref{E:DDA1D} for the nodal values $u_{m}=u^{(N)}(x_{m})$.

Alternatively, one can also consider the delta-delta scheme as a one-point quadrature approximation of a collocation method with piecewise constant trial functions. This is the popular point of view for DDA, see for example the error analysis in \cite{Yurkin:06i}.

Let $\chi_{j}$ be the characteristic function of the interval 
$I_{j}=[x_{j}-\frac h2,x_{j}+\frac h2]$.
We consider the space of piecewise constant functions
$S^{N,0}(\omega^{N})=\Span\{\chi_{j}\mid j\in\omega^{N}\}$.

Then the collocation scheme is:
Find $u^{N}\in S^{N,0}(\omega^{N})$ such that 
\begin{equation}
\label{E:Col}
 \lambda u^{N}(x_{m}) - \frac1{i\pi}\int_{\Omega}\frac{u^{N}(y)}{y-x_{m}}dy = f(x_{m})\,,\quad m\in\omega^{N} \,.
\end{equation}
From this we get our system \eqref{E:DDA1D} by 
taking the $u_{n}$ as the coefficients of $u^{N}$ in the basis of the $\chi_{n}$,
$$
  u^{N} = \sum_{n\in\omega^{N}}u_{n}\chi_{n},
$$
and for the off-diagonal matrix elements $A_{\Omega}\chi_{n}(x_{m})$  approximating the integrals by a one-point quadrature rule:
$$
 \int \frac{\chi_{k}(y)}{y-x_{m}}dy \;\sim\; 
 \begin{cases}0 & \text{ if $m=n$}\\
    \dfrac1{x_{n}-x_{m}}\,\dfrac1N & \text{ if $m\not=n$}
 \end{cases}
$$
After this replacement of the integrals, the system \eqref{E:Col} becomes \eqref{E:DDA1D}. 
For the diagonal elements, note that the Cauchy principal value integral
$\int \frac{\chi_{n}(y)}{y-x_{n}}dy$ vanishes for symmetry reasons. 

Notice that even if the coefficients $(u_{n})$ in \eqref{E:Ny}  and \eqref{E:Col} are the same, the approximate solutions $u^{N}$ and $u^{(N)}$ are not the same: $u^{N}$ in \eqref{E:Col} is a piecewise constant function, whereas $u^{(N)}$, which for $\lambda\ne0$ is defined by \eqref{E:Ny} once the $u_{k}$ are known, is a sum of the possibly smooth function $f$ and a rational function with simple poles in the grid points. 

\subsection{Consistency error}\label{SS:ConsErr} 
We define 
the \emph{consistency error} as the vector $c^{N}[u]$ with components
\begin{equation}
\label{E:cjN}
  c_{m}^{N}[u] =  \frac1{i\pi}\Big(\int_{a}^{b} \frac{u(y)}{y-x_{m}}dy 
  - h\sum_{n\in\omega^{N}\setminus\{m\}}\frac{u(x_{n})}{x_{n}-x_{m}}\Big),
  \quad m\in\omega^{N}\,.
\end{equation}
It can be written as $c^{N}[u]=(R_{N}A_{\Omega}-T^{N}R_{N})u$, where $R_{N}$ is the restriction operator that maps a function $u$ to its nodal values
$U=(u(x_{m}))_{m\in\omega^{N}}$. 

Thus it is the quadrature error for the midpoint rectangle rule applied to the Cauchy singular integral. 
This quadrature error has been estimated under various assumptions on the function $u$ in \cite[Section~1.3]{BelotserkovskyLifanov1993}, from which we borrow some ideas. We study here only the case where $u$ is H\"older continuous up to the boundary, $u\in C^{\alpha}([a,b])$ for some $\alpha\in(0,1]$.

In the applications of the singular integral equation \eqref{E:SIE} studied in \cite{BelotserkovskyLifanov1993}, mainly from fluid dynamics, more general functions $u$ with singularities at the boundary points $a$ and $b$ need to be considered.
But since we want to emphasize the analogy with the volume integral equation of the DDA method, and solutions of the latter tend to be smooth up to (smooth points of) the boundary, the simple situation of $u\in C^{\alpha}([a,b])$ will be sufficient for our purpose.

An advantage of this simplification is that we can get convergence results in an easy way from combining the consistency estimates of this section with the stability estimates of the preceding section.
Convergence results for the case of solutions with singularities seem to require a much more complicated analysis, see \cite[Chapter~7]{BelotserkovskyLifanov1993}.

We consider first the case where $u$ is constant.
 
\begin{lemma}
\label{L:const}
Let
$$
 s_{j}= \int_{a}^{b} \frac{dy}{y-x_{j}}
  - \sum_{k\in\omega^{N}\setminus\{j\}}\frac{h}{x_{k}-x_{j}}\,.
$$
Then for all $j\in\omega^{N}$
\begin{equation}
\label{E:const}
 |s_{j}| \le
 \frac{h^{2}}8 \big|\frac1{(x_{j}-a)^{2}}- \frac1{(b-x_{j})^{2}} \big|
 = |\tfrac{a+b}2 -x_{j}| (b-a) \big(\frac{h}{2(x_{j}-a)(b-x_{j})}\big)^{2}\,.
\end{equation}
\end{lemma}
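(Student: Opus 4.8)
The plan is to evaluate $s_j$ in closed form, the integral being understood as a principal value. Since $x_k-x_j=(k-j)h$, split $[a,b]=\bigcup_{k=1}^{M}I_k$: the integral $\pv\int_{I_j}\frac{dy}{y-x_j}$ vanishes by the oddness of $y\mapsto\frac1{y-x_j}$ about $y=x_j$ (the same symmetry that makes the $k=j$ term absent from the sum), while for $k\ne j$ the integral over $I_k$ is the ordinary one, $\log\frac{l+\frac12}{l-\frac12}$ with $l=k-j$, and $\frac{h}{x_k-x_j}=\frac1l$. Hence
\[
  s_j=\sum_{l=1-j,\;l\ne0}^{M-j}e_l\,,\qquad
  e_l:=\log\tfrac{l+\frac12}{l-\frac12}-\tfrac1l\quad(l\in\Z\setminus\{0\})\,.
\]
The identity $e_{-l}=-e_l$ collapses the sum,
\[
  s_j=\sum_{l=1}^{M-j}e_l-\sum_{l=1}^{j-1}e_l=E(M-j)-E(j-1)\,,\qquad
  E(n):=\sum_{l=1}^{n}e_l=\log(2n+1)-\sum_{l=1}^n\tfrac1l\,,
\]
the logarithmic part telescoping through $\prod_{l=1}^n\frac{l+\frac12}{l-\frac12}=2n+1$.

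The heart of the proof is the sharp one-term bound: for every integer $m\ge1$,
\[
  0<e_m\le\frac{m}{4(m-\frac12)^2(m+\frac12)^2}
  =\frac18\Big(\frac1{(m-\frac12)^2}-\frac1{(m+\frac12)^2}\Big)\,.
\]
I would prove it by the substitution $t=\frac1{2m}\in(0,\frac12]$. From $\log\frac{1+t}{1-t}=2\sum_{k\ge0}\frac{t^{2k+1}}{2k+1}$ one gets $e_m=2\sum_{k\ge1}\frac{t^{2k+1}}{2k+1}>0$, while the right-hand side equals $\frac{2t^3}{(1-t^2)^2}$; dividing by $2t^3$, the claim becomes $\sum_{k\ge0}\frac{t^{2k}}{2k+3}\le\frac1{(1-t^2)^2}$, which holds because the left side is $\le\frac13\sum_{k\ge0}t^{2k}=\frac1{3(1-t^2)}\le\frac1{(1-t^2)^2}$. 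Finding this comparison, and in particular choosing the term $\frac1{8(n+\frac12)^2}$ so that it telescopes against $\sum e_l$ with exactly the constant $\frac18$, is the only step that is not routine.

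Then, with $G(n):=E(n)+\frac1{8(n+\frac12)^2}$, the one-term bound gives $G(n)-G(n-1)=e_n-\frac18\big(\frac1{(n-\frac12)^2}-\frac1{(n+\frac12)^2}\big)\le0$, so $G$ is non-increasing, while $E$ is non-decreasing because $e_l>0$. Using the reflection $j\mapsto M+1-j$, which sends $s_j$ to $-s_j$ and leaves the right-hand side of \eqref{E:const} unchanged, I may assume $p:=j-1\le q:=M-j$. Then
\[
  0\le s_j=E(q)-E(p)=\big(G(q)-G(p)\big)+\frac1{8(p+\frac12)^2}-\frac1{8(q+\frac12)^2}
  \le\frac1{8(p+\frac12)^2}-\frac1{8(q+\frac12)^2}\,.
\]
Since $p+\frac12=(x_j-a)/h$, $q+\frac12=(b-x_j)/h$, and $x_j-a\le b-x_j$, this is exactly the first line of \eqref{E:const}; the concluding equality there is the elementary identity $(b-x_j)^2-(x_j-a)^2=2\big(\tfrac{a+b}2-x_j\big)(b-a)$.
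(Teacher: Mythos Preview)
Your proof is correct. Both you and the paper exploit the antisymmetry $e_{-l}=-e_l$ (the paper phrases it as $s_{jk}+s_{j,2j-k}=0$) to reduce $s_j$ to a one-sided sum, but the subsequent arguments diverge. The paper applies the classical midpoint-rule remainder formula $\int_{-h/2}^{h/2}f-hf(0)=\int\frac12(\tfrac h2-|t|)^2f''$ to each surviving $s_{jk}$, bounds the weight by $h^2/8$, and integrates the resulting $2(y-x_j)^{-3}$ exactly; this is a quadrature-theoretic argument that transfers verbatim to any $C^2$ kernel. You instead evaluate each $s_{jk}$ explicitly as $e_l=\log\frac{l+1/2}{l-1/2}-\frac1l$, obtain the closed form $s_j=E(M-j)-E(j-1)$ with $E(n)=\log(2n+1)-H_n$, and then prove the sharp termwise bound $e_m\le\frac18\big((m-\tfrac12)^{-2}-(m+\tfrac12)^{-2}\big)$ via the Taylor expansion of $\log\frac{1+t}{1-t}$, which telescopes to the same estimate. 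Your route is more elementary (no second-derivative remainder) and yields an explicit expression for $s_j$ that could be pushed further, e.g.\ to asymptotics; the paper's route is shorter once the remainder formula is quoted and makes clearer why the constant $\tfrac{h^2}8$ appears, and it generalizes to kernels for which no closed form is available.
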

\begin{proof}
 We split
$$
  s_{j} = \sum_{k\in\omega^{N}\setminus\{j\}}s_{jk} \quad\mbox{ with }\;
  s_{jk} = \int_{I_{k}} \frac{dy}{y-x_{j}}
  - \frac{h}{x_{k}-x_{j}}\,.
$$
First we observe that there are cancellations: $s_{jk}+s_{jk'}=0$ for 
$k'+k=2j$, because of the antisymmetry of the kernel. Therefore
$$
s_{j} = \sum_{k\in A_{j}}s_{jk}\quad\mbox{ with }\;
 A_{j} = 
 \begin{cases} \{k \mid 2x_{j}-a +\frac h2 < x_{k} <b\} & \mbox{ if }x_{j} \le \tfrac{a+b}2\,,
    \\[0.5ex]
               \{k \mid a < x_{k} < 2x_{j}-b-\frac h2\} & \mbox{ if }x_{j} \ge \tfrac{a+b}2\,.
 \end{cases}
$$
Then we apply the classical error representation for the midpoint rule 
$$
  \int_{-\frac h2}^{\frac h2} f(t)dt - hf(0) = 
   \int_{-\frac h2}^{\frac h2} \tfrac12 (\tfrac h2-|t|)^{2} f''(t) dt\,,
$$
valid for any $C^{2}$ function,
to the function
$f(t)=\frac1{t+x_{k}-x_{j}}$. 
Consider the case $x_{j} \le \tfrac{a+b}2$. Then for the relevant indices $k$, we have $x_{k}>x_{j}$ and therefore $f''(t)=2(t+x_{k}-x_{j})^{-3}>0$. Hence
$$
 0<s_{jk}<\frac{h^{2}}8\int_{I_{k}}\frac{2dy}{(y-x_{j})^{3}}
 \quad\Longrightarrow\;
 0<s_{j}< \frac{h^{2}}8\int_{2x_{j}-a}^{b}\frac{2dy}{(y-x_{j})^{3}}
 = \frac{h^{2}}8\big( -\frac1{(b-x_{j})^{2}} + \frac1{(x_{j}-a)^{2}}\big)\,.
$$
Likewise, for $x_{j} \ge \tfrac{a+b}2$, the second derivative is always negative, and we obtain
$$
 0>s_{j}> \frac{h^{2}}8\int_{a}^{2x_{j}-b}\frac{2dy}{(y-x_{j})^{3}}
 = \frac{h^{2}}8\big( -\frac1{(b-x_{j})^{2}} + \frac1{(x_{j}-a)^{2}}\big)\,.
$$
\end{proof}
The lemma means that for $x_{j}$ in any compact subinterval of $(a,b)$, the $s_{j}$ converge to zero as $O(h^{2})$. But when $x_{j}$ tends to $a$ or $b$ as $h\to0$, then $s_{j}$ will have non-zero limits. For example for $x_{1}=a+\frac h2$, one finds $s_{1}\to\log2-\gamma$, where $\gamma=0.5772..$ is Euler's constant. For this case, the estimate \eqref{E:const} gives $s_{j}\le\frac12$ in the limit.

Another way to express this behavior is that for a 
given error threshold $\epsilon$, the points $x_{j}$ where $|s_{j}|$ exceeds $\epsilon$ are confined to a boundary layer of thickness $h/\sqrt{2\epsilon}$.

%

\begin{proposition}
\label{P:cons}
Let $0<\alpha\le1$. 
For $u\in C^{\alpha}([a,b])$ let $c_{j}^{N}[u]$ be defined by \eqref{E:cjN}. 
Then
\begin{equation}
\label{E:consCalpha}
 |c_{j}^{N}[u]| \le
 C\,\Big(
 h^{\alpha}|\log h| \DNorm{u}{C^{\alpha}([a,b])} 
  + |u(x_{j})| \frac{h^{2}}{(x_{j}-a)^{2}(b-x_{j})^{2}}
 \Big)\,.
\end{equation}
Here the constant $C$ depends on $a,b$ but not on $h=\frac1N$ nor on $u$.\\
If, in addition, $u(a)=u(b)=0$ holds, then the estimate simplifies to
\begin{equation}
\label{E:consC0alpha}
 |c_{j}^{N}[u]| \le
 C\,
 h^{\alpha}|\log h| \DNorm{u}{C^{\alpha}([a,b])} 
 \,.
\end{equation}
\end{proposition}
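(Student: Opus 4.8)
The plan is to estimate the consistency error $c_j^N[u]$ by comparing the Cauchy principal value integral against the deleted-neighbor quadrature sum, splitting the analysis according to how far the grid point $x_j$ sits from the endpoints. First I would write
$$
 c_j^N[u] = \frac1{i\pi}\Big( \int_a^b \frac{u(y)-u(x_j)}{y-x_j}\,dy - h\!\!\sum_{n\in\omega^N\setminus\{j\}}\!\!\frac{u(x_n)-u(x_j)}{x_n-x_j} \Big) + \frac{u(x_j)}{i\pi}\, s_j,
$$
using that $\int_a^b \frac{dy}{y-x_j}$ and the corresponding sum differ exactly by $s_j$ from Lemma~\ref{L:const}. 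The last term is already controlled by \eqref{E:const}, which produces precisely the $|u(x_j)|\,h^2/((x_j-a)^2(b-x_j)^2)$ contribution in \eqref{E:consCalpha}. So the work is to estimate the quadrature error for the now weakly singular integrand $\frac{u(y)-u(x_j)}{y-x_j}$, which is bounded by $\DNorm{u}{C^\alpha}|y-x_j|^{\alpha-1}$.

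The main term I would handle by the standard technique for quadrature of functions with an integrable singularity. Separate the sum and integral into a near field, say the two cells $I_k$ adjacent to $I_j$ (indices $k=j\pm1$), where one estimates the integral and the quadrature term separately and crudely by $C h^\alpha \DNorm{u}{C^\alpha}$ each using the H\"older bound directly; and a far field, the cells with $|x_k-x_j|\ge 2h$, where on each such $I_k$ the integrand $g(y)=\frac{u(y)-u(x_j)}{y-x_j}$ is $C^\alpha$ with a derivative bound $|g'(y)| \le C\DNorm{u}{C^\alpha}|y-x_j|^{\alpha-2}$. Applying the elementary midpoint error bound $|\int_{I_k} g - h\,g(x_k)| \le \tfrac{h}{2}\int_{I_k}|g'(y)|\,dy$ (or a second-derivative version if one wants $h^2$; but $C^\alpha$ only gives one usable derivative away from $x_j$ via a difference quotient argument, so I would use the first-order form) and summing over the far-field cells gives
$$
 \sum_{|x_k-x_j|\ge 2h} C h\,\DNorm{u}{C^\alpha}\!\int_{I_k}|y-x_j|^{\alpha-2}dy
 \;\le\; C h\,\DNorm{u}{C^\alpha}\!\int_{2h}^{b-a}\!\! t^{\alpha-2}\,dt,
$$
and this integral is $O(h^{\alpha-1})$ when $\alpha<1$ and $O(|\log h|)$ when $\alpha=1$; in all cases it is bounded by $C h^{\alpha-1}|\log h|$, so multiplying by $h$ yields the claimed $C h^\alpha|\log h|\,\DNorm{u}{C^\alpha}$. (A slightly more careful bookkeeping is needed because near the endpoints the far field does not extend the full length $b-a$ from $x_j$, but that only helps.) The $|\log h|$ is genuinely needed only in the endpoint case $\alpha=1$; for $\alpha<1$ it is a harmless overestimate. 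Finally, for the reduced estimate \eqref{E:consC0alpha}: if $u(a)=u(b)=0$ then $|u(x_j)|=|u(x_j)-u(a)|\le \DNorm{u}{C^\alpha}(x_j-a)^\alpha$ and likewise $|u(x_j)|\le \DNorm{u}{C^\alpha}(b-x_j)^\alpha$, so the endpoint term is bounded by $\DNorm{u}{C^\alpha}\,h^2 (x_j-a)^{\alpha-2}(b-x_j)^{\alpha-2}\cdot((x_j-a)(b-x_j))$ — more simply, $|u(x_j)|^{1/2}$ estimates in each factor give $|u(x_j)| h^2/((x_j-a)^2(b-x_j)^2) \le \DNorm{u}{C^\alpha} h^2 (x_j-a)^{\alpha-1}(b-x_j)^{\alpha-1} \le C\DNorm{u}{C^\alpha} h^\alpha$ since $x_j-a\ge h/2$, $b-x_j\ge h/2$ and $\alpha-1\le 0$, and this is absorbed into the first term.

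The step I expect to be the main obstacle is the precise handling of the near-field cells and the transition to the far field: near $x_j$ the integrand is only $C^\alpha$, so the midpoint-rule error bound via derivatives does not apply and one must argue directly, and one has to make sure the sum $h\sum_{n\ne j}(u(x_n)-u(x_j))/(x_n-x_j)$ is itself well-defined and the near-field terms are $O(h^\alpha)$ — this uses that in the cell $I_{j\pm1}$ one has $|x_{j\pm1}-x_j|=h$, so the quadrature term is $O(h\cdot h^\alpha/h)=O(h^\alpha)$, while $\int_{I_{j\pm1}}|y-x_j|^{\alpha-1}dy=O(h^\alpha)$ as well. One must also be careful that the cancellation structure exploited in Lemma~\ref{L:const} is \emph{not} available for the $u(y)-u(x_j)$ part, so the full weakly-singular tail really does contribute, which is the source of the $h^\alpha|\log h|$ rate rather than $h^2$. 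Keeping the dependence of the constant $C$ on $a,b$ explicit but on nothing else is routine once the splitting is in place.
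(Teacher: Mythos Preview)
Your overall decomposition is the same as the paper's: subtract $u(x_j)$ to isolate the $u(x_j)s_j$ term handled by Lemma~\ref{L:const}, and then estimate the quadrature error for the weakly singular integrand $g(y)=\frac{u(y)-u(x_j)}{y-x_j}$. The near-field treatment and the reduction to \eqref{E:consC0alpha} are also essentially the paper's arguments (your algebra in the last step is a bit garbled, but once you use $|u(x_j)|\le C\DNorm{u}{C^\alpha}(x_j-a)^{\alpha}(b-x_j)^{\alpha}$ and the boundedness of $h/((x_j-a)(b-x_j))$ it goes through as in the paper).

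There is, however, a genuine gap in your far-field step. You invoke the midpoint error bound $|\int_{I_k}g-hg(x_k)|\le\tfrac{h}{2}\int_{I_k}|g'|$ together with $|g'(y)|\le C\DNorm{u}{C^\alpha}|y-x_j|^{\alpha-2}$, but for $u\in C^\alpha$ with $\alpha<1$ the function $u$ is in general nowhere differentiable, so $g'$ simply does not exist and no such bound can hold; even for $\alpha=1$ Lipschitz regularity does not give a classical derivative. The fix, which is exactly what the paper does, is to avoid differentiating $u$: write
\[
 g(y)-g(x_k)=\frac{u(y)-u(x_k)}{y-x_j}+\bigl(u(x_k)-u(x_j)\bigr)\Bigl(\frac{1}{y-x_j}-\frac{1}{x_k-x_j}\Bigr)
\]
and integrate over $I_k$. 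The first piece contributes $(h/2)^{\alpha}\DNorm{u}{C^\alpha}\int_{I_k}\frac{dy}{|y-x_j|}$, whose sum over $k\ne j$ produces the $h^{\alpha}|\log h|$ term for \emph{every} $\alpha\in(0,1]$; the second piece is bounded by $C\DNorm{u}{C^\alpha}h^{\alpha}|k-j|^{\alpha-2}$, which is summable for $\alpha<1$ and brings another $|\log h|$ only when $\alpha=1$. So the $|\log h|$ is not merely an artifact of the endpoint case as you suggest; in this argument it is generated by the first piece for all $\alpha$.
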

\begin{proof}
Split
\begin{equation*}
\begin{split}
 i\pi c_{j}^{N}[u] &= c_{jj} + \sum_{k\ne j}c_{jk} + u(x_{j})s_{j}\quad\mbox{ with }\\
 c_{jj} &= \int_{I_{j}}\frac{u(y)}{y-x_{j}}dy,\;
 c_{jk} = \int_{I_{k}}\big(\frac{u(y)-u(x_{j})}{y-x_{j}} - \frac{u(x_{k})-u(x_{j})}{x_{k}-x_{j}}
 \big)dy
 \end{split}
\end{equation*}
and $s_{j}$ as defined and estimated in Lemma~\ref{L:const}.\\
For $c_{jj}$ we find
$$
|c_{jj}|=\big|\int_{I_{j}}\frac{u(y)-u(x_{j})}{y-x_{j}}dy\big| \le
\int_{I_{j}}|y-x_{j}|^{\alpha-1}dy\, \DNorm{u}{C^{\alpha}([a,b])} \le
h^{\alpha}\, \DNorm{u}{C^{\alpha}([a,b])} \,.
$$
For the term $u(x_{j})s_{j}$ we use the estimate from Lemma~\ref{L:const}. 
It remains to estimate $\sum_{k\ne j}c_{jk}$.
Write
$$
c_{jk}= c_{jk}^{0} + c_{jk}^{1} \quad\mbox{ with }
c_{jk}^{0} =  \int_{I_{k}}\frac{u(y)-u(x_{k})}{y-x_{j}}dy\,,\;
c_{jk}^{1} =  \int_{I_{k}}\big(u(x_{k})-u(x_{j})\big)
  \big(\tfrac{1}{y-x_{j}}-\tfrac{1}{x_{k}-x_{j}}
 \big)dy.
$$
Then
$$
\Big|\sum_{k\ne j}c_{jk}^{0}\Big| \le
 \big(\frac h2\big)^{\alpha} \DNorm{u}{C^{\alpha}([a,b])}
 \Big(
 \int_{a}^{x_{j}-\frac h2} \frac{dy}{x_{j}-y} + \int_{x_{j}+\frac h2}^{b} \frac{dy}{y-x_{j}}
 \Big) \le
 C\,h^{\alpha}|\log h|\,\DNorm{u}{C^{\alpha}([a,b])}\,.
$$
Finally, $c_{jk}^{1} =  \int_{I_{k}}\big(u(x_{k})-u(x_{j})\big)
  \tfrac{x_{k}-y}{(y-x_{j})(x_{k}-x_{j})}
 dy$
 can be estimated as
$$
 |c_{jk}^{1}| \le \DNorm{u}{C^{\alpha}} \int_{I_{k}}
  \frac{|x_{k}-y|}{|y-x_{j}|x_{k}-x_{j}|^{1-\alpha}} dy
 \le C\, \DNorm{u}{C^{\alpha}}\,h^{2}\, |x_{k}-x_{j}|^{\alpha-2}
  = C\, \DNorm{u}{C^{\alpha}}\,h^{\alpha}\, |k-j|^{\alpha-2}
  .
$$
For $\alpha<1$, the infinite series $\sum_{k\in\Z\setminus\{j\}}|k-j|^{\alpha-2}$ converges, so that
$$
 \Big|\sum_{k\ne j}c_{jk}^{1}\Big| \le C\, h^{\alpha}\, \DNorm{u}{C^{\alpha}}\,.
$$
For $\alpha=1$, we would pick up an extra factor of $\log N=|\log h|$. \\
The estimate \eqref{E:consCalpha} is proved.\\
Finally, if $u(a)=u(b)=0$, then $|u(x_{j)}|\le C\,\DNorm{u}{C^{\alpha}}(x_{j}-a)^{\alpha}(b-x_{j})^{\alpha}$, hence
$$
 |u(x_{j})| \frac{h^{2}}{(x_{j}-a)^{2}(b-x_{j})^{2}}
 \le C\, h^{\alpha} \DNorm{u}{C^{\alpha}} 
   \big( \frac{h}{(x_{j}-a)(b-x_{j})}
   \big)^{2-\alpha}
   \,.
$$
The last factor is uniformly bounded, and this completes the proof of \eqref{E:consC0alpha}
\end{proof}

\subsection{Discrete error}\label{SS:DiscErr}\ 
If $u$ is the solution of the integral equation \eqref{E:SIE} with right hand side $f$ and 
$U^{N}=(u_{n})_{n\in\omega^{N}}$ is solution of the discrete system \eqref{E:DDD} with $f_{m}=f(x_{m})$, then we define the \emph{discrete error} $E^{N}$ as vector with the components $E_{m}= u(x_{m}) - u_{m}$, $m\in \omega^{N}$. It is easy to see that $E^{N}$ satisfies the discrete system with the consistency error as right hand side,
\begin{equation}
\label{E:DErrCons}
 (\lambda\Id-T^{N})E^{N} = c^{N}[u]\,.
\end{equation}
By a direct application of our stability and consistency estimates we obtain an estimate for the discrete error.
We choose the situation where the simplified consistency estimate \eqref{E:consC0alpha} holds.
\begin{theorem}    
\label{T:conv0}              
Assume $\lambda\in\C\setminus[-1,1]$. Let $u$ be solution of the integral equation \eqref{E:SIE} with right hand side $f$ and 
$(u_{n})_{n\in\omega^{N}}$ be solution of the discrete system \eqref{E:DDD} with $f_{m}=f(x_{m})$. If we assume that $u\in C^{\alpha}([a,b])$ with $\frac12<\alpha\le1$ and $u(a)=u(b)=0$, then there is a constant $C$ independent of $N$ such that the discrete error	
satisfies
\begin{equation}
\label{E:DErrCov}
  \DNorm{E^{N}}{\ell^{2}(\omega^{N})} \le C\,N^{\frac12-\alpha}\log N\,.
\end{equation}
\end{theorem}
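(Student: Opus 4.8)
The plan is to obtain the theorem as a direct consequence of the stability estimate \eqref{E:Res1D} and the simplified consistency estimate \eqref{E:consC0alpha}, coupled through the error identity \eqref{E:DErrCons}. The hypothesis $\lambda\in\C\setminus[-1,1]$ is exactly the condition $\lambda\notin\cC$ required in Proposition~\ref{P:Stab1D}, and the hypothesis $u\in C^{\alpha}([a,b])$ with $u(a)=u(b)=0$ is exactly what makes \eqref{E:consC0alpha} applicable; so both ingredients are in hand, and the only thing left is to pass from the pointwise bound on the consistency error to its $\ell^{2}(\omega^{N})$ norm.

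First I would combine \eqref{E:DErrCons}, i.e.\ $(\lambda\Id-T^{N})E^{N}=c^{N}[u]$, with the resolvent bound \eqref{E:Res1D} to get
$$
 \DNorm{E^{N}}{\ell^{2}(\omega^{N})}\le\dist(\lambda,\cC)^{-1}\,\DNorm{c^{N}[u]}{\ell^{2}(\omega^{N})}\,.
$$
Next I would apply Proposition~\ref{P:cons}: since $u(a)=u(b)=0$, the estimate \eqref{E:consC0alpha} gives the uniform pointwise bound $|c_{j}^{N}[u]|\le C\,h^{\alpha}|\log h|\,\DNorm{u}{C^{\alpha}([a,b])}$ for every $j\in\omega^{N}$, with $C$ independent of $h$ and $u$. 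Summing the squares over $\omega^{N}=\{1,\dots,M\}$ and using $M=(b-a)/h=(b-a)N$ from \eqref{E:abmesh} together with $h=\frac1N$, $|\log h|=\log N$,
$$
 \DNorm{c^{N}[u]}{\ell^{2}(\omega^{N})}^{2}\le M\big(C\,h^{\alpha}|\log h|\,\DNorm{u}{C^{\alpha}([a,b])}\big)^{2}\le C^{2}(b-a)\,N^{1-2\alpha}(\log N)^{2}\,\DNorm{u}{C^{\alpha}([a,b])}^{2}\,.
$$
Taking square roots and inserting into the previous line yields $\DNorm{E^{N}}{\ell^{2}(\omega^{N})}\le C'\,N^{\frac12-\alpha}\log N$, where $C'$ absorbs $\dist(\lambda,\cC)^{-1}$, the constant of \eqref{E:consC0alpha}, $\sqrt{b-a}$, and the fixed number $\DNorm{u}{C^{\alpha}([a,b])}$; this is \eqref{E:DErrCov}.

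There is no genuine obstacle here: the two substantial inputs (stability and consistency) are already proved, and the remaining arithmetic is simply that the $\ell^{2}$ norm of a vector with $M\sim N$ entries, each of size $O(h^{\alpha}\log h)$, is $O(\sqrt{N}\,h^{\alpha}\log h)$. Two small points should be made explicit, however. One must note that $\DNorm{u}{C^{\alpha}([a,b])}$ is a constant depending on $f$ and $\lambda$ but not on $N$, so it may be folded into $C$. And the restriction $\alpha>\frac12$ is used only to make the right-hand side of \eqref{E:DErrCov} tend to zero: the estimate itself holds verbatim for all $0<\alpha\le1$, and the fact that the exponent is $\frac12-\alpha$ rather than something reflecting the interior rate $O(h^{2})$ from Lemma~\ref{L:const} is precisely the loss incurred by using the crude, position-independent bound \eqref{E:consC0alpha} in place of a sharper estimate tracking the boundary layer.
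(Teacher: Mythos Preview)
Your proof is correct and follows essentially the same approach as the paper: combine the error identity \eqref{E:DErrCons} with the stability bound \eqref{E:Res1D}, then pass from the $\ell^{\infty}$ consistency estimate \eqref{E:consC0alpha} to $\ell^{2}$ at the cost of a factor $\sqrt{M}\sim\sqrt{N}$. Your additional remarks on the role of the hypothesis $\alpha>\tfrac12$ are accurate and worth keeping.
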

\begin{proof}
From the error equation \eqref{E:DErrCons} and our stability estimate \eqref{E:Res1D} we obtain
$$
 \DNorm{E^{N}}{\ell^{2}(\omega^{N})} \le C\,\DNorm{c^{N}[u]}{\ell^{2}(\omega^{N})}\,.
$$
Now our consistency estimate \eqref{E:consC0alpha} is an $\ell^{\infty}(\omega^{N})$ estimate, so we lose a factor of $\sqrt N$:
$$
  \DNorm{c^{N}}{\ell^{2}(\omega^{N})} \le 
  |\omega^{N}|^{\frac12} \DNorm{c^{N}}{\ell^{\infty}(\omega^{N})} \le
    C\,N^{\frac12} \DNorm{c^{N}}{\ell^{\infty}(\omega^{N})}\le
    C\,N^{\frac12-\alpha}\log N \DNorm{u}{C^{\alpha}([a,b])}\,.
$$
\end{proof}

\subsection{Convergence in $L^{2}$}\label{SS:ConvL2}\ 

When the solution $u$ does not vanish on the boundary, we only have the consistency estimate \eqref{E:consCalpha}, and the consistency error $c^{N}[u]$ will not converge to zero in $\ell^{\infty}$ and even less in $\ell^{2}$, hence the discrete error will not converge to zero in $\ell^{2}$, either. Instead, we can study the error of the piecewise constant approximation
$$
 e^{N} = u - u^{N} \quad \mbox{ with }\quad u^{N}=\sum_{k\in\omega^{N}}u_{k}\chi_{k}\,.
$$
Its $L^{2}(\Omega)$ norm can be bounded by
$$\begin{aligned}
 \DNormc{e^{N}}{L^{2}(a,b)}&= 
  \sum_{k\in\omega^{N}} \int_{I_{k}}|u(y)-u_{k}|^{2}dy\\ &\le
  2\sum_{k\in\omega^{N}}
  \Big(\int_{I_{k}}|u(y)-u(x_{k})|^{2}dy + \int_{I_{k}}|u(x_{k})-u_{k}|^{2}dy
  \Big)\\ &\le
  2\sum_{k\in\omega^{N}}
  \Big(h^{2\alpha+1} \DNormc{u}{C^{\alpha}} + h\,|E_{k}|^{2}
  \Big)\\ &\le
  C\, \big( h^{2\alpha}\DNormc{u}{C^{\alpha}([a,b])} 
    + h\,\DNormc{E^{N}}{\ell^{2}(\omega^{n})} \big)\,.
\end{aligned}
$$
Thus we get convergence as soon as we can show that $N^{-\frac12}\DNorm{E^{N}}{\ell^{2}}$ tends to zero. By our $\ell^{2}$ stability estimates, this is equivalent to the fact that
$N^{-\frac12}\DNorm{c^{N}}{\ell^{2}}$ tends to zero. For this, as we have seen before, it would be \emph{sufficient} that the consistency error $\DNorm{c^{N}}{\ell^{\infty}}$ tends to zero. 

But this is not \emph{necessary}: In fact, assume that the consistency estimate \eqref{E:consCalpha} is satisfied. It implies
$$
 |c_{j}^{N}| \le C\,\DNorm{u}{C^{\alpha}([a,b])}\Big( h^{\alpha}|\log h| +
  \max\big\{\frac{h^{2}}{(x_{j}-a)^{2}},\,\frac{h^{2}}{(b-x_{j})^{2}}\big\} \Big)
$$
Now $x_{j}-a$ takes the values $\frac h2$, $\frac{3h}2$, $\frac{5h}2$,..., and therefore the sum
$$
\sum_{j\in\omega^{N}}\big(\frac{h^{2}}{(x_{j}-a)^{2}}\big)^{2}
$$
is bounded independently of $N$ by a constant 
$$
 C'= 16\sum_{j=0}^{\infty}(2j+1)^{-4} < \infty\,.
$$
This implies finally
$$
 h\DNormc{c^{N}}{\ell^{2}(\omega^{N})} \le C\,\DNormc{u}{C^{\alpha}([a,b])}
 \Big(h^{2\alpha}|\log h|^{2} + h\,C'\Big)\,.
$$
We have proved the following error estimate.
\begin{theorem}    
\label{T:conv}              
Assume $\lambda\in\C\setminus[-1,1]$. Let $u$ be solution of the integral equation \eqref{E:SIE} with right hand side $f$, let
$U^{N}=(u_{k})_{k\in\omega^{N}}$ be solution of the discrete system \eqref{E:DDA1D} with $f_{j}=f(x_{j})$, and let
$u^{N}=\sum_{k\in\omega^{N}}u_{k}\chi_{k}$ be the corresponding piecewise constant function.
 If we assume that $u\in C^{\alpha}([a,b])$, then there is a constant $C$ independent of $N$ such that the error	$e^{N}=u-u^{N}$ satisfies
$$
  \DNorm{e^{N}}{L^{2}(a,b)} \le C\,\big(N^{-\alpha}\log N + N^{-\frac12}\big)\,.
$$
\end{theorem}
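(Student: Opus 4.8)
The plan is to estimate the $L^{2}$ error of the piecewise constant approximant $u^{N}$ by splitting it into an interpolation part, governed solely by the H\"older regularity of $u$, and a part carried by the discrete nodal error $E^{N}=(u(x_{k})-u_{k})_{k\in\omega^{N}}$, to which Proposition~\ref{P:Stab1D} and the consistency estimate \eqref{E:consCalpha} can then be applied. Since $u^{N}$ equals the constant $u_{k}$ on each $I_{k}$, the bound $|u(y)-u_{k}|^{2}\le 2|u(y)-u(x_{k})|^{2}+2|u(x_{k})-u_{k}|^{2}$ gives
\begin{equation*}
 \|e^{N}\|_{L^{2}(a,b)}^{2}=\sum_{k\in\omega^{N}}\int_{I_{k}}|u(y)-u_{k}|^{2}\,dy\le 2\sum_{k\in\omega^{N}}\Big(\int_{I_{k}}|u(y)-u(x_{k})|^{2}\,dy+h\,|E_{k}|^{2}\Big).
\end{equation*}
The H\"older bound $|u(y)-u(x_{k})|\le\|u\|_{C^{\alpha}([a,b])}|y-x_{k}|^{\alpha}$ yields $\int_{I_{k}}|u(y)-u(x_{k})|^{2}\,dy\le C\,h^{2\alpha+1}\|u\|_{C^{\alpha}([a,b])}^{2}$, and there are $|\omega^{N}|=M\sim N$ such intervals, so the first sum contributes $C\,h^{2\alpha}\|u\|_{C^{\alpha}([a,b])}^{2}$. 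Hence everything is reduced to controlling $h^{1/2}\|E^{N}\|_{\ell^{2}(\omega^{N})}$.

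For this I would use the discrete error equation \eqref{E:DErrCons}, $(\lambda\Id-T^{N})E^{N}=c^{N}[u]$, together with the uniform resolvent estimate \eqref{E:Res1D}, which applies because $\lambda\notin\cC$, to get $\|E^{N}\|_{\ell^{2}(\omega^{N})}\le\dist(\lambda,\cC)^{-1}\|c^{N}[u]\|_{\ell^{2}(\omega^{N})}$. Thus the whole estimate rests on a bound for $\|c^{N}[u]\|_{\ell^{2}(\omega^{N})}$, and here the crude inequality $\|c^{N}\|_{\ell^{2}}\le M^{1/2}\|c^{N}\|_{\ell^{\infty}}$ is useless, since \eqref{E:consCalpha} and Lemma~\ref{L:const} show that $\|c^{N}\|_{\ell^{\infty}}$ does not tend to zero near the endpoints.

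The key step is to keep the two terms of \eqref{E:consCalpha} apart. Using $|u(x_{j})|\le\|u\|_{C^{\alpha}([a,b])}$ and the fact that at least one of $x_{j}-a$, $b-x_{j}$ is $\ge\tfrac{b-a}{2}$, one rewrites \eqref{E:consCalpha} as
\begin{equation*}
 |c_{j}^{N}[u]|\le C\,\|u\|_{C^{\alpha}([a,b])}\Big(h^{\alpha}|\log h|+\max\big\{\tfrac{h^{2}}{(x_{j}-a)^{2}},\,\tfrac{h^{2}}{(b-x_{j})^{2}}\big\}\Big).
\end{equation*}
Summing the square of the first term over the $M\sim N$ indices contributes $\le C\,h^{\alpha-1/2}|\log h|$ to $\|c^{N}\|_{\ell^{2}}$. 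For the second term, the mesh choice \eqref{E:abmesh} makes $x_{j}-a$ run through $\tfrac h2,\tfrac{3h}2,\dots$, so $\tfrac{h^{2}}{(x_{j}-a)^{2}}=\tfrac{4}{(2j-1)^{2}}$ and $\sum_{j\ge1}\big(\tfrac{h^{2}}{(x_{j}-a)^{2}}\big)^{2}\le 16\sum_{j\ge1}(2j-1)^{-4}<\infty$ \emph{uniformly in $N$}, and the endpoint $b$ is treated symmetrically; hence this term contributes only $O(1)$. Altogether $\|c^{N}[u]\|_{\ell^{2}(\omega^{N})}\le C\,\|u\|_{C^{\alpha}([a,b])}\big(h^{\alpha-1/2}|\log h|+1\big)$.

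Combining the pieces gives $h^{1/2}\|E^{N}\|_{\ell^{2}(\omega^{N})}\le C\,\|u\|_{C^{\alpha}([a,b])}\big(h^{\alpha}|\log h|+h^{1/2}\big)$; inserting this into the splitting of $\|e^{N}\|_{L^{2}}^{2}$, absorbing the $h^{2\alpha}$ interpolation term into $(h^{\alpha}|\log h|)^{2}$ (legitimate since $|\log h|\ge1$ for small $h$), taking square roots and recalling $h=\tfrac1N$, yields the claimed bound $\|e^{N}\|_{L^{2}(a,b)}\le C\,(N^{-\alpha}\log N+N^{-1/2})$. I expect the only delicate point to be the treatment of the consistency error: its non-vanishing boundary-layer behaviour (Lemma~\ref{L:const}) is precisely what produces the irreducible $N^{-1/2}$ term, and the argument closes only because the offending part of $c^{N}$, seen as a sequence in $j$, has $\ell^{2}$-norm — in fact $\ell^{4}$-norm — bounded independently of $N$, so that it survives the factor $h^{1/2}$ only at order $O(h^{1/2})$.
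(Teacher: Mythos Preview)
Your proof is correct and follows essentially the same route as the paper: the same splitting of $\|e^{N}\|_{L^{2}}^{2}$ into an interpolation part and $h\,\|E^{N}\|_{\ell^{2}}^{2}$, the same reduction via \eqref{E:DErrCons} and \eqref{E:Res1D} to $\|c^{N}[u]\|_{\ell^{2}}$, the same rewriting of \eqref{E:consCalpha} with the $\max$, and the same key observation that $\sum_{j}(h^{2}/(x_{j}-a)^{2})^{2}\le 16\sum_{j\ge0}(2j+1)^{-4}$ is bounded uniformly in $N$, which produces the $N^{-1/2}$ term. Your commentary on why the crude $\ell^{2}\le M^{1/2}\ell^{\infty}$ bound fails and on the $\ell^{4}$-summability of the boundary-layer part is exactly the mechanism the paper highlights.
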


The last term $O(\sqrt h)$ in this error estimate that prevents the estimate to improve with regularity above $C^{\frac12}$ is due to the boundary layer and the choice of the $L^{2}$ norm for measuring the error. One could choose $L^{p}$ for $1\le p\le\infty$ instead. In the same way as above, one obtains
$$
  \DNorm{e^{N}}{L^{p}(a,b)} \le  C \big( h^{\alpha}\DNormc{u}{C^{\alpha}([a,b])} 
    + h^{\frac1p}\,\DNorm{E^{N}}{\ell^{p}(\omega^{n})} \big)\,.
$$
This would be best for $p=1$, but we are tied to $p=2$ because we depend on our stability estimate.

\section{Numerical experiments}\label{S:Num}
In this section, we fix arbitrarily the interval $[a,b]=[-0.15,1.35]$ and choose $N$ as an odd multiple of $10$, so that $[a,b]$ is subdivided in $3N/2$ subintervals of length $h=\frac1N$. 
We use the $3$ examples discussed in section~\ref{SS:Expl}, where we divide the first example by $-2\pi i$, so that $u_{0}(x)=1$. In the second example, we normalize by dividing by $2i$, so that $u_{1}(x)=\sqrt{(x-a)(b-x)}$. 
For the third example, we choose $\alpha=0.25$, so that 
$u_{2}(x)=\Big(\frac{x-a}{b-x}\Big)^{\frac14}/\sqrt2$.
In Figure~\ref{F:DDD}, we saw the $3$ exact solutions with their computed piecewise constant approximation, computed for $N=10$ with $\lambda=2$.

\subsection{Behavior of consistency error and discrete error}\label{SS:c&E}
Knowing the exact solution, we can define the consistency error $c^{N}$ as in \eqref{E:cjN} and the discrete error $E^{N}$ as in Section~\ref{SS:DiscErr}. In Figure~\ref{F:Ex1-3,ufcE}, we plot for our 3 examples the absolute values of $c^{N}$ and $E^{N}$ as functions of $x\in[a,b]$ for $N=10,50,250,1250$. In the logarithmic scale, one can see the convergence in the interior of the interval with a power of $N$, whereas near the boundary points there is either much slower convergence for example 2, where $u_{1}(a)=u_{1}(b)=0$, or no convergence at all for example 1 where the solution $u_{0}$ does not vanish at the boundary points, thus illustrating the analysis shown in Section~\ref{S:Err}. Note that we have not presented an analysis for the case of a singular solution such as the one in the third example, but the numerical results show a similar behavior as for the other examples, although the consistency and discrete errors, like the solution itself, tend to infinity near $b$.
\begin{figure}[h]
\centering
\includegraphics[width=0.32\linewidth]{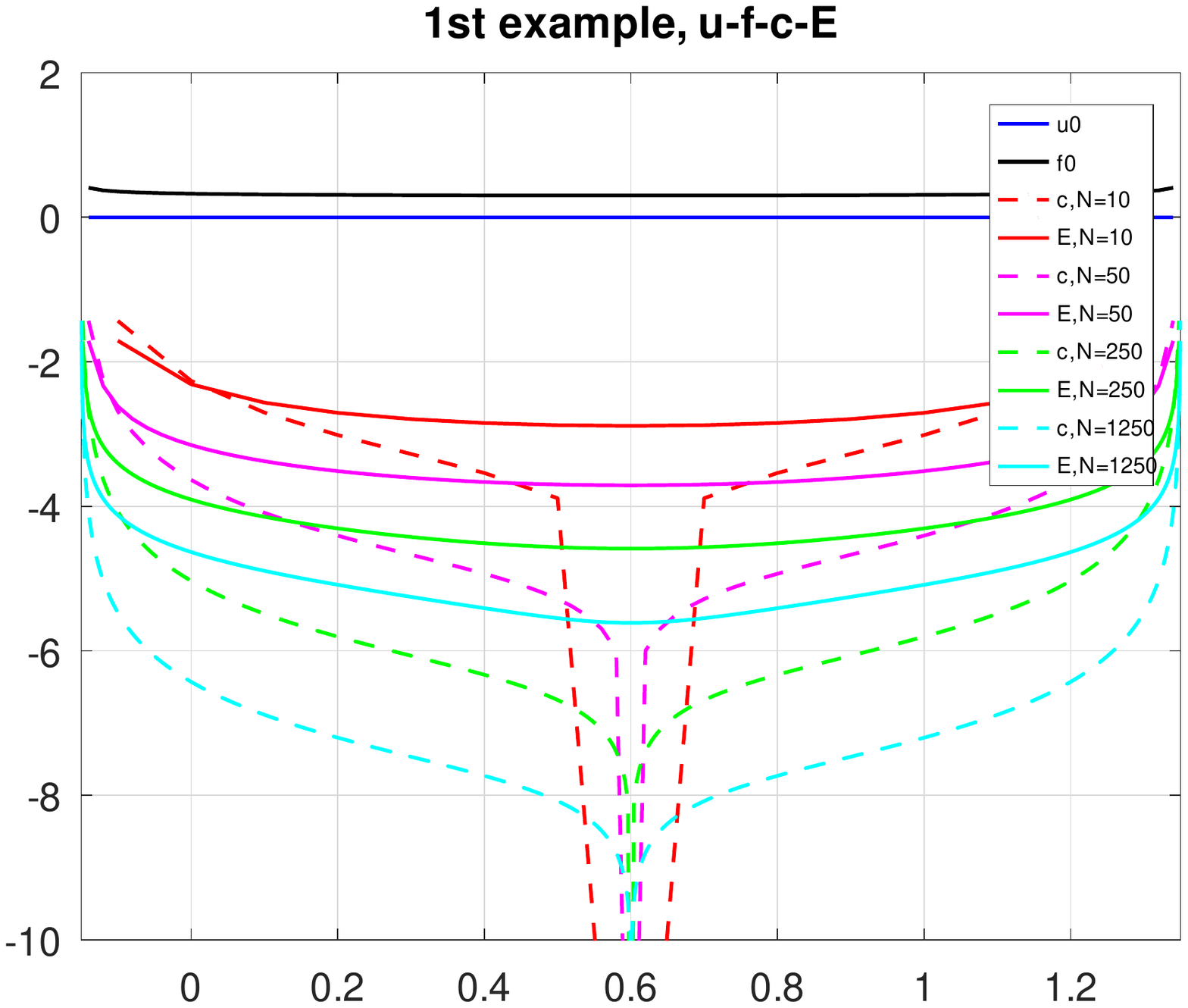}
\includegraphics[width=0.32\linewidth]{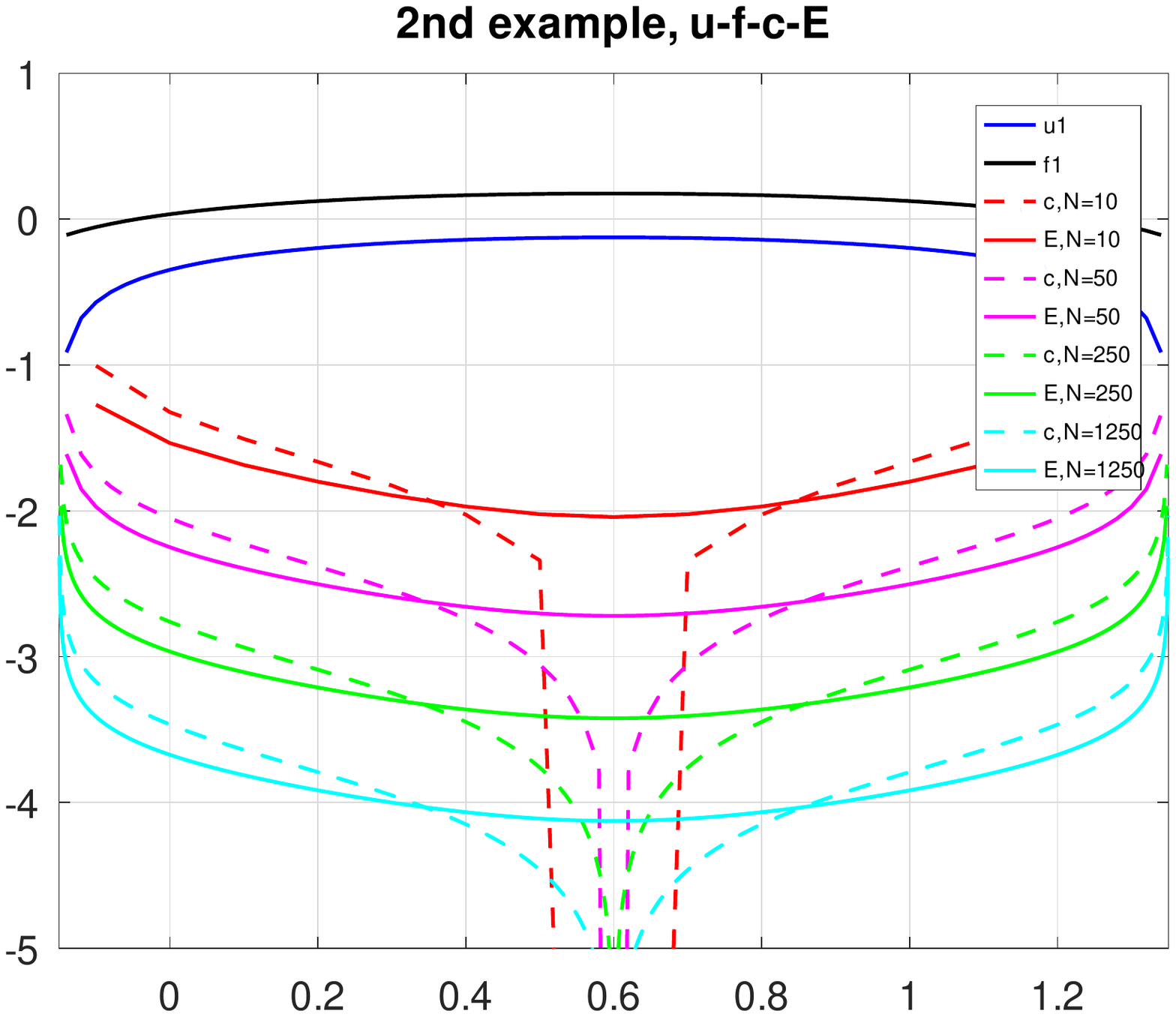}
\includegraphics[width=0.32\linewidth]{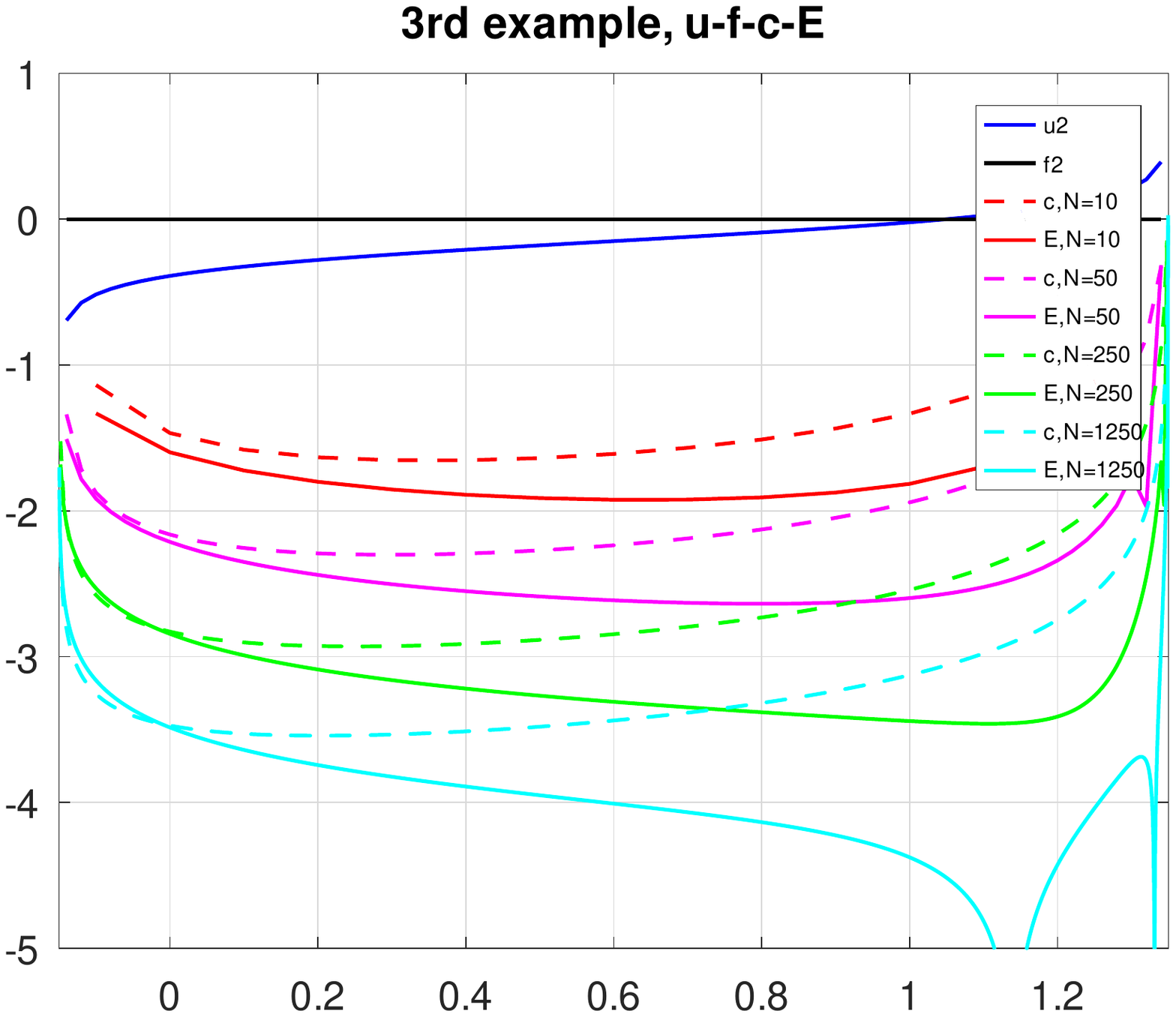}
\caption{Exact solution \textbf{u}, rhs \textbf{f}, consistency error \textbf{c} and discrete error \textbf{E}}
\label{F:Ex1-3,ufcE}
\end{figure}

\subsection{Convergence rates}
In Figure \ref{F:Ex1-3,errnorms} we plot several error norms as functions of $N=10\times3^{j}$, $j=0,...,6$ in a loglog scale:
The $\ell^{2}(\omega^{N})$ norms of the consistency error $c^{N}$ and the discrete error $E^{N}$ as well as the $\ell^{2}$ norm of the restriction of $E^{N}$ to the interior subinterval $[0,1.2]$ of $(a,b)$. Finally the $L^{2}(a,b)$ norm of the error $e^{N}$ of the piecewise constant approximation, which we compare with the normalized $\ell^{2}$ norm of $E^{N}$, 
$N^{-\frac12}\DNorm{E^{N}}{\ell^{2}(\omega^{N})}$. 
What we observe for the three examples can be compared with the error estimates in Proposition~\ref{P:cons} and Theorems~\ref{T:conv0} and \ref{T:conv}, taking into account that we only have \emph{global} error estimates. Whereas we have seen in Section~\ref{SS:ConsErr} that the consistency error $c^{N}$ behaves differently in the interior of the domain and near the boundary, our technique of proof does not allow to infer the same behavior for the discrete error $E^{N}$. Yet we see for both examples 1 and 2 a $O(N^{-\frac12})$ behavior for $\DNorm{E_{\rm int}^{N}}{\ell^{2}}$, which would correspond to an order of $O(N^{-1})$ for the $\ell^{\infty}$ norm and also for the $L^{2}$ error of the corresponding piecewise constant approximation. 

For the global $\ell^{2}$ norms of the errors $c^{N}$ and $E^{N}$ we see that in example 1 they do not tend to zero, in accordance with the behavior near the boundary of estimates \eqref{E:const} and \eqref{E:consCalpha}. In example 2, they are of order $O(N^{-\frac12})$, which reflects again the boundary layer behavior for the approximation of the function $u_{1}$, which is H\"older continuous of class $C^{\frac12}$ there. 

The global $L^{2}$ norm of the error $e^{N}$ is seen to behave as $O(N^{-\frac12})$ in example 1, which corresponds to Theorem~\ref{T:conv}. In example 2, the behavior is $O(N^{-1})$, which better  than what Theorem~\ref{T:conv} predicts for a $C^{\frac12}$ solution. The explanation is that whereas the Theorems~\ref{T:conv0} and \ref{T:conv}  consider worst-case scenarios, the solution $u_{1}$ is in fact $C^{\infty}$ except at the two boundary points, so that estimate \eqref{E:DErrCov} may be seen to hold with $\alpha=1$.

\begin{figure}[h]
\centering
\includegraphics[width=0.32\textwidth]{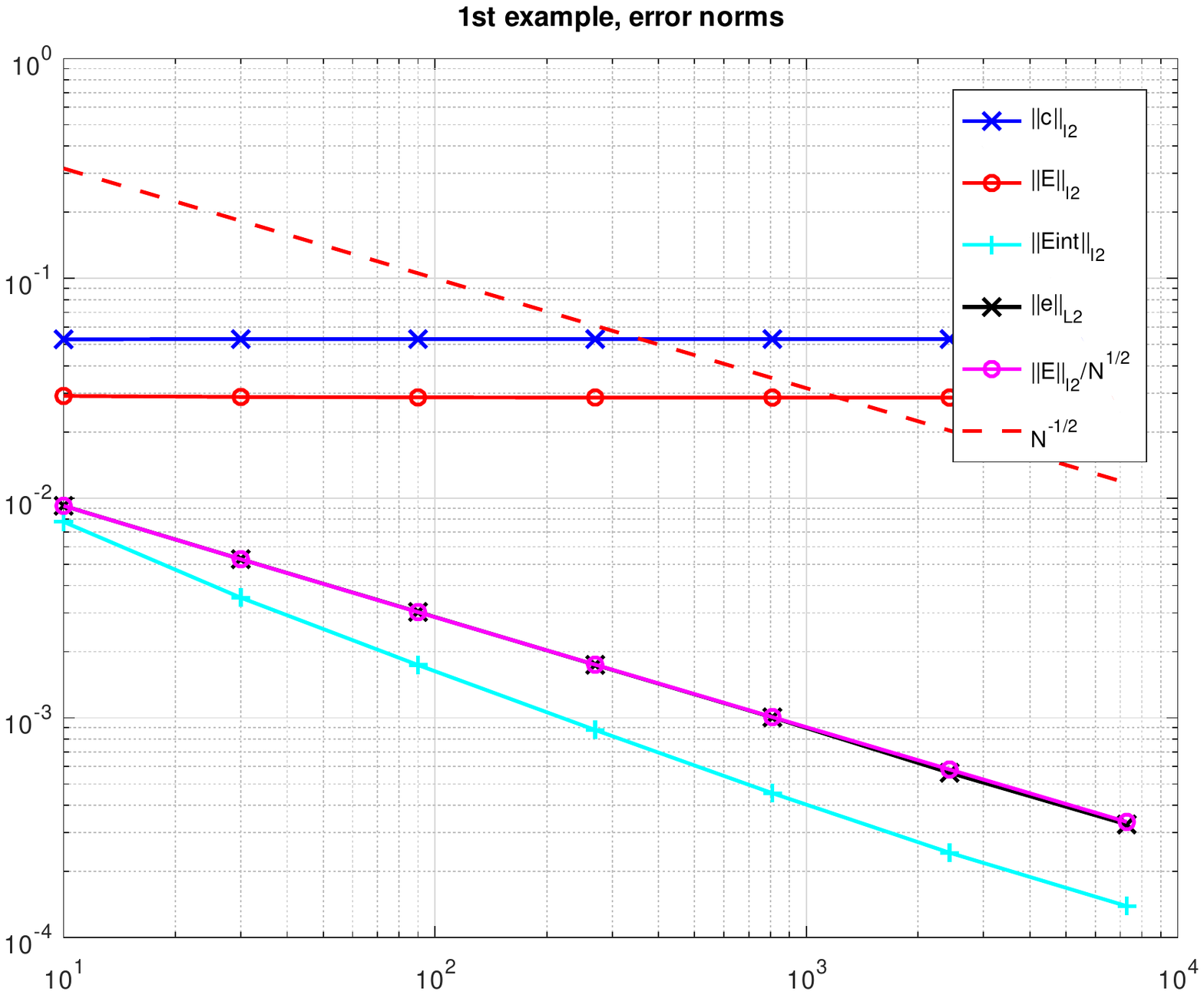}
\includegraphics[width=0.32\textwidth]{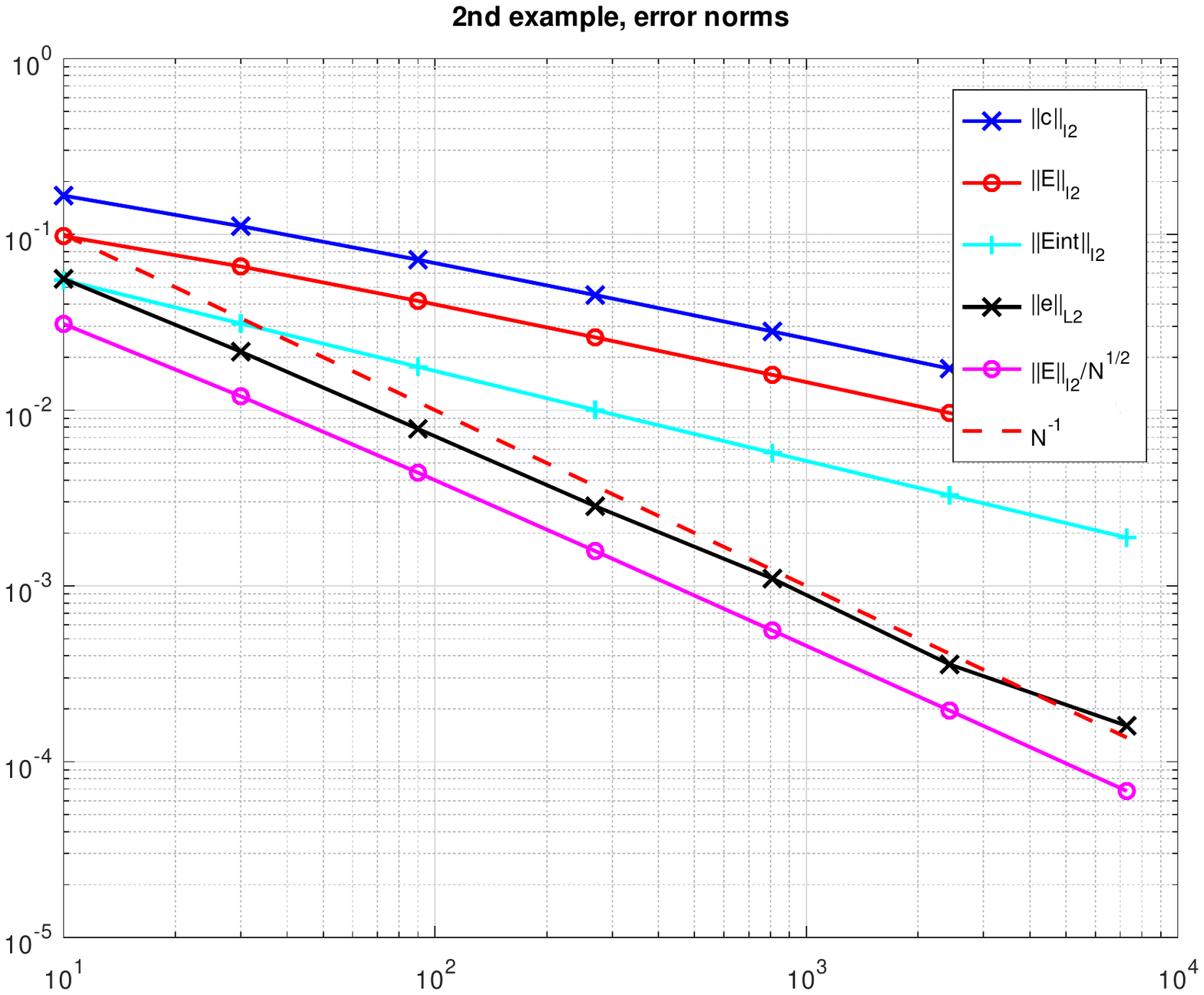}
\includegraphics[width=0.32\textwidth]{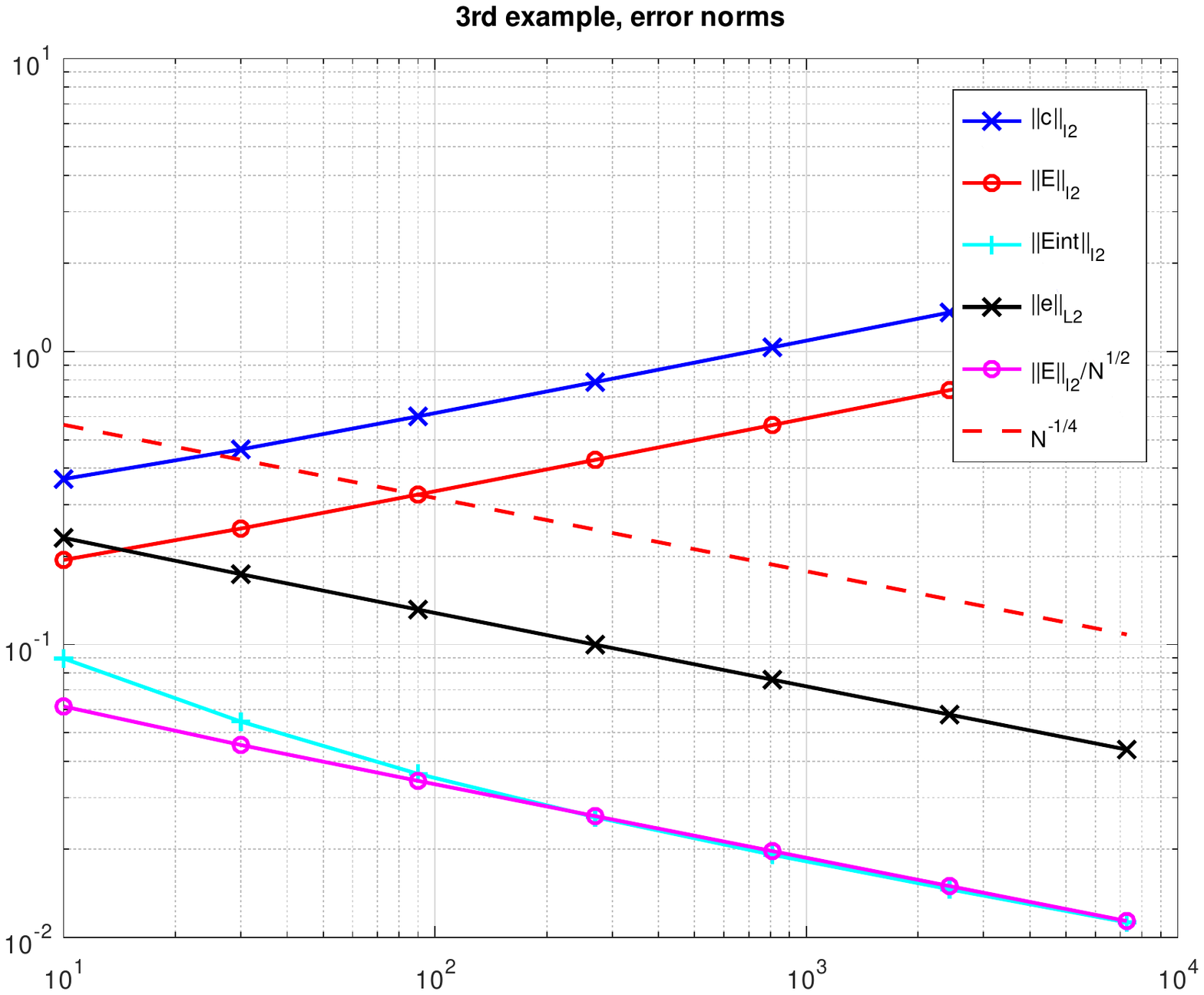}
\caption{Error norms, compared to $N^{-1/2}$ (Ex. 1),  $N^{-1}$ (Ex. 2), $N^{-1/4}$ (Ex. 3),}
\label{F:Ex1-3,errnorms}
\end{figure}





\bibliographystyle{acm}
\bibliography{database.bib}

\end{document}